\begin{document}

\markboth{Zilong Cui and Ran Gu}{Heuristic Bundle Upper Bound Based Polyhedral Bundle Method for semidefinite programming}

\title{Heuristic Bundle Upper Bound Based Polyhedral Bundle Method for Semidefinite Programming}

\author{Zilong Cui
\thanks{NITFID, School of Statistics and Data Science, Nankai University, Tianjin 300071, China\\ Email: zl\_c@mail.nankai.edu.cn}
\and
Ran Gu\footnotetext[1]{Corresponding author}\textsuperscript{1)}
\thanks{NITFID, School of Statistics and Data Science, Nankai University, Tianjin 300071, China\\ Email: rgu@nankai.edu.cn}}
\maketitle

\begin{abstract}
Semidefinite programming (SDP) is a fundamental class of convex optimization problems with diverse applications in mathematics, engineering, machine learning, and related disciplines. This paper investigates the application of the polyhedral bundle method to standard SDPs. The basic idea of this method is to approximate semidefinite constraints using linear constraints, and thereby transform the SDP problem into a series of quadratic programming subproblems. However, the number of linear constraints often increases continuously during the iteration process, leading to a significant reduction in the solution efficiency. Therefore, based on the idea of limiting the upper bound on the number of bundles, we heuristically select the upper bound through numerical experiments according to the rank of the primal optimal solution and propose a modified subproblem. In this way, under the premise of ensuring the approximation ability of the lower approximation model, we minimize the number of bundles as much as possible to improve the solution efficiency of the subproblems. The algorithm performs well in both Max-Cut problems and random SDP problems. In particular, for random sparse SDP problems with low condition numbers, under the condition of a relative accuracy of \(10^{-4}\), it shows a significant improvement compared with algorithms such as interior-point methods.
\end{abstract}

\begin{classification}
65K05, 90C22.
\end{classification}

\begin{keywords}
Semidefinite programming, Polyhedral bundle method, Quadratic programming, Complementary slackness.
\end{keywords}

\section{Introduction}
\label{sec:into}
In this paper, we study the following primal SDP in the standard form
\begin{equation}\label{eq:Psdp}
\underset{X\in \mathbb{S}^{n}}{\min}\quad\langle C,X \rangle\quad \text{s.t.}\quad \mathcal{A}X=b  \quad \text{and}\quad X \succeq 0,
\end{equation}
where \(C \in \mathbb{S}^{n}\) and \(b \in \mathbb{R}^{m}\) where \(\mathbb{S}^{n}\) denotes the set of n-dimensional symmetric matrices. The linear map \(\mathcal{A}: \mathbb{S}^{n} \rightarrow\mathbb{R}^{m}\) has rank \(m\) which can be expressed explicitly as \(\mathcal{A}(X) = [\langle A_{1},X \rangle,\cdots, \langle A_{m},X \rangle]^{T}\) with each \( A_{i}\in \mathbb{S}^{n} \) for \( i = 1, 2, \cdots, m \). The matrices \( A_{i} (i = 1, 2, \cdots, m) \) are linearly independent in \( \mathbb{S}^{n} \). \(X \succeq 0\) denotes that the symmetric matrix \(X\) is positive semidefinite and it can also be expressed as \(X \in \mathbb{S}^{n}_{+}\), meanwhile the symbol \(\mathbb{S}^{n}_{++}\) denotes the set of n-dimensional positive definite matrices. Assuming that the primal variables have a unique solution we denote the rank of the optimal solution \(X^{\star}\) of the primal variables by \(r^{\star}\). The dual of the primal SDP (\ref{eq:Psdp}) is given by
\begin{equation}\label{eq:Dsdp}
\underset{y\in \mathbb{R}^{m}}{\min}-b^{T}y\quad\text{s.t.}\quad S=C-\mathcal{A}^{T}y\quad\text{and}\quad S\succeq 0,
\end{equation}
where the linear operator \(\mathcal{A}^{T}:\mathbb{R}^{m}\to \mathbb{S}^{n}\) denotes the adjoint operator of the linear operator \(\mathcal{A}\), and for any \(y \in \mathbb{R}^{m}\), \(\mathcal{A}^{T}y\) is given by \(\sum_{i = 1}^{m}y_{i}A_{i}\). For a positive semidefinite matrix \( S \in \mathbb{S}^{n}_{+} \), we define its condition number here as \( \mathcal{\bar{\kappa}}(S) = \frac{\lambda_{\max}(S)}{\lambda_{\min>0}(S)} \), where \( \lambda_{\min>0}(S) \) denotes the smallest non-zero eigenvalue of \( S \). In this paper, all the condition numbers we refer to correspond to the one defined here.

SDPs form a class of convex optimization problems and it can be regarded as an extension of linear programming (LP). The dimensionality of the matrix variable is an order of magnitude larger than that of the vector variable in LP, and the positive semidefinite constraint is non-polyhedral with a curved boundary, both of which make SDP difficult to solve. From the perspective of research history, the theoretical origin of SDP can be traced back to the paper published by Bellman and Fan in 1963 \cite{bellman1963systems}. However, problems related to SDP appeared earlier in the field of control theory and the starting point of research in this direction can be traced back to Lyapunov's classic research on the stability of dynamic systems in 1890, which laid an early foundation for the subsequent application of SDP in related fields. SDP has a wealth of classic application scenarios such as the positive semidefinite relaxation of the Max-Cut problem \cite{delorme1993laplacian}, matrix completion \cite{srebro2005rank}, phase retrieval \cite{chai2010array,waldspurger2015phase}, community detection \cite{mathieu2010correlation}, optimal power flow \cite{bai2008semidefinite}, combinatorial optimization \cite{alizadeh1995interior}, and the positive semidefinite penalty problem of quadratically constrained quadratic programming (QCQP) \cite{gu2021positive} and other fields.

\subsection{Assumption}
The optimality conditions (KKT conditions) for primal SDP (\ref{eq:Psdp}) and dual SDP (\ref{eq:Dsdp}) can be written as
\begin{equation}\label{kkt}
\mathcal{A}X-b=0,\quad\mathcal{A}^{T}y+S-C=0,\quad\langle X,S \rangle=0,\quad X\succeq0,\quad S\succeq0.
\end{equation}
However, the existence‌ of KKT point relies on the following assumptions.
\begin{assumption}\label{as:1}
For primal SDP (\ref{eq:Psdp}), there exists a feasible solution $X_{0} \in \mathbb{S}^{n}_{+}$ such that
\begin{equation*}
\mathcal{A}X_{0}-b=0,\quad X_{0}\succ0.
\end{equation*}
\end{assumption}
\begin{assumption}\label{as:2}
For dual SDP (\ref{eq:Dsdp}), there exists a feasible solution $(y_{0}, S_{0})\in \mathbb{R}^{m}\times \mathbb{S}^{n}_{+}$ such that
\begin{equation*}
\mathcal{A}^{T}y_{0}+S_{0}-C=0,\quad S_{0}\succ0.
\end{equation*}
\end{assumption}
Assumption \ref{as:1} and Assumption \ref{as:2} ensure that primal SDP (\ref{eq:Psdp}) and dual SDP (\ref{eq:Dsdp}) satisfy Slater’s constraint qualification. According to convex analysis\cite{borwein2006convex} and the above assumptions, the strong duality holds for primal SDP (\ref{eq:Psdp}) and dual SDP (\ref{eq:Dsdp}), and the KKT conditions (\ref{kkt}) have solutions \((X^{\star},y^{\star},S^{\star})\). From duality gap \(\langle X^{\star},S^{\star} \rangle=0\) and \(X^{\star}\succeq0,S^{\star}\succeq0\), we have the complementary slackness condition:
\begin{equation*}
X^{\star}S^{\star}=X^{\star}(C-\mathcal{A}^{T}y^{\star})=0.
\end{equation*}
The complementarity condition indicates that \( X^{\star}\) and \( S^{\star}\) commute (i.e., satisfying the condition that \( X^{\star}S^{\star} = S^{\star}X^{\star}\)), thus they share a common system of eigenvectors. Thus we have the following lemma \ref{lemma:1}\cite{alizadeh1997complementarity}.
\begin{lemma}\label{lemma:1}
Let \( X^{\star}\) and \((y^{\star},S^{\star})\) be respectively primal and dual feasible for (\ref{eq:Psdp}) and (\ref{eq:Dsdp}). Then they are optimal if and only if there exists \(Q\in \mathbb{R}^{n \times n}\), with \(Q^{T}Q = I\), such that
\begin{equation*}
X^{\star}=Q\text{diag}(\lambda_{1},\lambda_{2},\cdots,\lambda_{n})Q^{T},
S^{\star}=Q\text{diag}(w_{1},w_{2},\cdots,w_{n})Q^{T},
\end{equation*}
\text{and}
\begin{equation}\label{eq:cs}
\lambda_{i}w_{i}=0,i=1,\cdots,n
\end{equation}
\text{all hold}.
\end{lemma}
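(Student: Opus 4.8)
The plan is to establish the two implications separately, using strong duality (available from Assumptions \ref{as:1} and \ref{as:2}) for one direction and weak duality for the other. The common thread is the identity, valid for any primal feasible $X^{\star}$ and dual feasible $(y^{\star},S^{\star})$,
\begin{equation*}
\langle C,X^{\star}\rangle-b^{T}y^{\star}=\langle C,X^{\star}\rangle-\langle\mathcal{A}X^{\star},y^{\star}\rangle=\langle C-\mathcal{A}^{T}y^{\star},X^{\star}\rangle=\langle S^{\star},X^{\star}\rangle,
\end{equation*}
obtained from primal feasibility $\mathcal{A}X^{\star}=b$, dual feasibility $S^{\star}=C-\mathcal{A}^{T}y^{\star}$, and the adjoint identity $\langle X^{\star},\mathcal{A}^{T}y^{\star}\rangle=\langle\mathcal{A}X^{\star},y^{\star}\rangle$. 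Thus optimality of the pair is equivalent to $\langle X^{\star},S^{\star}\rangle=0$.

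For the forward implication I would start from optimality of $X^{\star}$ and $(y^{\star},S^{\star})$: strong duality gives $\langle C,X^{\star}\rangle=b^{T}y^{\star}$, hence $\langle X^{\star},S^{\star}\rangle=0$ by the display above. The crux is then the linear-algebra fact that two positive semidefinite matrices with zero inner product annihilate one another: I would write $\langle X^{\star},S^{\star}\rangle=\text{tr}\bigl((S^{\star})^{1/2}X^{\star}(S^{\star})^{1/2}\bigr)$, note that $(S^{\star})^{1/2}X^{\star}(S^{\star})^{1/2}\succeq0$ has zero trace and is therefore the zero matrix, factor it as the Gram matrix of $(X^{\star})^{1/2}(S^{\star})^{1/2}$ to conclude $(X^{\star})^{1/2}(S^{\star})^{1/2}=0$, and deduce $X^{\star}S^{\star}=S^{\star}X^{\star}=0$. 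Since $X^{\star}$ and $S^{\star}$ are commuting symmetric matrices they are simultaneously orthogonally diagonalizable, which produces the claimed $Q$ with $X^{\star}=Q\text{diag}(\lambda_{1},\cdots,\lambda_{n})Q^{T}$ and $S^{\star}=Q\text{diag}(w_{1},\cdots,w_{n})Q^{T}$; substituting these back into $X^{\star}S^{\star}=0$ forces $\text{diag}(\lambda_{1}w_{1},\cdots,\lambda_{n}w_{n})=0$, i.e. (\ref{eq:cs}).

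For the reverse implication I would assume the existence of such a $Q$ with $\lambda_{i}w_{i}=0$ for all $i$ and compute directly $\langle X^{\star},S^{\star}\rangle=\text{tr}\bigl(\text{diag}(\lambda_{1},\cdots,\lambda_{n})\,\text{diag}(w_{1},\cdots,w_{n})\bigr)=\sum_{i=1}^{n}\lambda_{i}w_{i}=0$. By the opening identity the duality gap $\langle C,X^{\star}\rangle-b^{T}y^{\star}$ then vanishes, so weak duality forces $X^{\star}$ to be primal optimal and $(y^{\star},S^{\star})$ to be dual optimal.

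The main obstacle is the passage from $\langle X^{\star},S^{\star}\rangle=0$ to $X^{\star}S^{\star}=0$ and the resulting simultaneous diagonalization; everything else is routine bookkeeping with strong and weak duality. The one point to handle carefully is the legitimacy of the square-root factorizations — guaranteed because feasibility forces $X^{\star}\succeq0$ and $S^{\star}\succeq0$ — together with the elementary fact that a positive semidefinite matrix with zero trace is identically zero.
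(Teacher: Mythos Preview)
Your proposal is correct and follows essentially the same route the paper sketches: the paper notes that $\langle X^{\star},S^{\star}\rangle=0$ together with $X^{\star},S^{\star}\succeq0$ yields $X^{\star}S^{\star}=0$, hence commutativity and a common eigenbasis, then defers to \cite{alizadeh1997complementarity} for the formal statement. You have simply supplied the details the paper omits (notably the square-root argument for the passage from zero trace inner product to $X^{\star}S^{\star}=0$, and the converse via weak duality), so there is no substantive difference in approach.
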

The equation (\ref{eq:cs}) implies that
\begin{equation*}
rank(X^{\star}) + rank(S^{\star}) \leq n.
\end{equation*}
In order to ensure that the semidefinite problem does not fall into a degenerate case, we need to make a stronger assumption, that is, every solution pair \((X^{\star},y^{\star},S^{\star})\) satisfies the following stronger strict complementarity condition:
\begin{equation*}
rank(X^{\star}) + rank(S^{\star}) = n.
\end{equation*}
This implies that for any \(i = 1, \dots, n\), exactly one of \(\lambda_i = 0\) or \(w_i = 0\) holds. It is worth noting that this assumption ensures that all solutions have the same rank. Therefore, in this case, the solution to the original problem is actually unique\cite{lemon2016low}. Moreover, this assumption holds for most practical problems, so it is a mild assumption.

\subsection{Related Works}
In this subsection, we introduce the algorithmic advancements in SDP. In the early stages, Nesterov and Nemirovski \cite{nesterov1989self,nesterov1994interior}, Alizadeh \cite{alizadeh1995interior,alizadeh1991combinatorial}, and others independently applied second-order interior-point methods to SDP. These methods generate a sequence of unconstrained subproblems during iterations and employ Newton's method for approximate solutions. Theoretically, second-order interior-point methods can solve any SDP problem to arbitrary precision within polynomial time. However, each iteration requires solving a linear system involving the KKT system, making computational and memory costs prohibitively high for large-scale SDP problems. Nevertheless, second-order interior-point methods remain practical for solving medium and small-scale SDP problems, as evidenced by software implementations like SeDuMi \cite{sturm1999using}, MOSEK\cite{aps2019mosek}, and SDPT3 \cite{toh1999sdpt3}.

In recent years, the demand for large-scale SDP applications has continued to grow \cite{majumdar2020recent}. First-order algorithms have become a major research direction in this field, primarily due to their advantages of low iteration complexity and low storage overhead. For example, the Alternating Direction Method of Multipliers (ADMM) has been introduced to solve large-scale SDP problems, leading to the development of SDPAD \cite{wen2010alternating}. The 2EBD \cite{monteiro2014first} addresses SDP problems with two-easy-block structures using first-order block decomposition techniques. Subsequently, the convergent three-block ADMM method based on symmetric Gauss-Seidel \cite{sun2015convergent} has been developed, along with algorithms for solving the homogeneous self-dual embedding problems of SDP within the ADMM framework \cite{o2016conic}. Another effective framework for solving SDP is the Augmented Lagrangian Method (ALM). SDPNAL \cite{zhao2010newton} employs the semi-smooth Newton method to construct a Newton-CG ALM, and its extended MATLAB package SDPNAL+ \cite{yang2015sdpnal+} can handle SDP problems with boundary constraints, demonstrating excellent numerical performance. Similarly, the SSNSDP algorithm \cite{li2018semismooth} is designed to provide a solution framework based on the equivalence between Douglas-Rachford Splitting (DRS) and ADMM. To address memory issues within the AL framework, optimal-storage algorithms such as SketchyCGAL \cite{yurtsever2021scalable} (combining sketching techniques \cite{tropp2017practical} and conditional gradient methods) and CSSDP \cite{ding2021optimal} (which solves the dual problem via first-order methods and recovers the primal solution) have been developed. Exploring the application of chordal sparsity \cite{tang2024exploring,zheng2020chordal} and Facial Reduction \cite{hu2023facial,permenter2018partial} techniques in SDP remains an important research direction.  

An alternative idea is to factorize the matrix variable \(X = UU^{T}\), where \(U \in \mathbb{R}^{n\times r}\) with \(r\ll n\), and reformulate the primal SDP (\ref{eq:Psdp}) as:
\begin{equation}\label{eq:lowrank}
\underset{U\in \mathbb{R}^{n \times r}}{\min}\quad\langle C,UU^{T} \rangle\quad \text{s.t.}\quad \mathcal{A}(UU^{T})=b.
\end{equation}
A wide variety of nonlinear programming methods can be applied to optimize the equation (\ref{eq:lowrank}) with respect to the variable \(U\). The theoretical studies of Barvinok \cite{barvinok1995problems} and Pataki \cite{burer2003nonlinear} indicate that the primal SDP (\ref{eq:Psdp}) always has a solution with rank \(r\), where \(\frac{r(r + 1)}{2} \leq m\). Inspired by the existence of low-rank solutions to SDP, Burer and Monteiro \cite{burer2003nonlinear} proposed to solve the optimization problem (\ref{eq:lowrank}) with a limited-memory BFGS algorithm. A further development in low-rank methods for SDP that utilizes the idea of matrix factorization is the Riemannian optimization method \cite{journee2010low,wen2013feasible,boumal2015riemannian,wang2023decomposition,tang2024feasible}. These methods are based on the assumption that the feasible set of the equation (\ref{eq:lowrank}) is a Riemannian manifold. 

A related yet distinct strategy adopting the low-rank concept approximates the positive semidefinite constraint \( X \in \mathbb{S}_n^+ \) by \( X = V S V^\top \), where \( V \in \mathbb{R}^{n \times l} \), \( S \in \mathbb{S}_+^l \), and \( r \leq l \ll n \). This is one of the core ideas in the design of spectral bundle methods \cite{helmberg2000spectral,ding2023revisiting}. Another class of methods similar to spectral bundle methods is the polyhedral bundle method \cite{krishnan2006unifying,Gu2017optimization}. The polyhedral bundle method is an iterative method that modifies the dual SDP (\ref{eq:Dsdp}): it transforms the semidefinite constraint into a polyhedral constraint composed of multiple linear constraints, and adds a quadratic term centered at a given reference point to the objective function. In this way, we convert an SDP problem into a quadratic programming problem, which enables efficient solution of the problem. Its main challenge lies in how to continuously revise the polyhedral constraint during iterations, with the goal of better approximating the semidefinite constraint while minimizing the number of bundles as much as possible. Regarding the specific form of the polyhedral bundle method, we will describe the above content using the penalty function method in subsequent sections.  
\subsection{Our contribution}
The core advantage of the polyhedral bundle method lies in its ability to convert SDP into a series of quadratic programming subproblems, and the computational cost of generating and solving these quadratic programming subproblems is far lower than that of the quadratic SDP subproblems relied on by the spectral bundle method. However, under the classical polyhedral bundle framework, if there are no effective constraints on the number of bundles, the lower approximation model may cause the number of bundles to increase continuously with iterations in its effort to enhance its own approximation capability during the iteration process, which in turn leads to a decline in the solution efficiency of the subproblems. Therefore, setting a reasonable upper bound \( l_{\max} \) for the number of bundles in the polyhedral bundle method and minimizing the number of constraints during the iteration process as much as possible are crucial for achieving efficient solution. Based on this basic idea, we first propose a modified quadratic programming subproblem, which not only avoids the singularity that may occur during the solution process but also reduces the number of constraints required in the solution process to a certain extent.

Regarding the setting of the upper bound \( l_{\max} \) for the number of bundles, we established a heuristic relationship between the upper bound of the number of bundles and the rank of the optimal solution \( r^{\star} \) for standard SDP through numerical experiments. Setting the upper bound of the number of bundles as \( l_{\max} = \frac{1}{2}r^{\star}(r^{\star}+1)+r^{\star} \) can effectively reduce the dimension of decision variables in the quadratic programming subproblems and improve the solution efficiency. If the rank of the optimal solution \( r^{\star} \) is unknown, the rank of the optimal solution \( r^{\star} \) can also be predicted in the early stage of iterations using the rank prediction algorithm we proposed.

Our algorithm performs well in both medium-and large-scale random sparse SDP problems and Max-Cut problems, especially for problems with low condition numbers. Our research shows that for problems with high condition numbers, insufficient number of bundles will make it difficult for the algorithm to converge. We believe the reason for this phenomenon is that a high condition number significantly increases the difficulty of the lower approximation model in approximating the original problem, thus, more bundles are needed to enhance the model's approximation capability. Therefore, by appropriately increasing the upper bound of the number of bundles or raising the rank used in the iteration process, the algorithm can solve the problem to the required precision within a given time or number of iterations.
\subsection{Notation}
We will use the Frobenius norm \(\|\cdot\|_F\), the \(\ell_2\) operator norm \(\|\cdot\|_{\text{op}}\), the norm \(\|\cdot\|_2\), and \(\|\cdot\|\) induced by the canonical inner product of the underlying real vector space in this paper. Let \( E \) be a non-empty set of points in \( \mathbb{R}^d \); then the convex hull of \( E \) (denoted as \( \text{conv}(E) \)) is defined as the smallest convex set in \( \mathbb{R}^d \) that contains \( E \). Regardless of whether \(a\) and \(b\) are vectors or matrices, assuming they have the same number of rows, we denote their horizontal concatenation as \([a, b]\); similarly, assuming they have the same number of columns, we denote their vertical concatenation as \([a; b]\). We consider that the proper closed convex function function \(F(y)\) is \(M\)-Lipschitz continuous, with \(M<\infty\):
\begin{equation*}
|F(x)-F(y)| \leq M\| x -y\| \text{, for all }x,y \in \mathbb{R}^{m}. 
\end{equation*}
For a symmetric matrix \(A \in \mathbb{S}^{n}\), we define:  
\begin{equation*}
\text{svec}(A) = \left[ a_{11}, \sqrt{2}a_{21}, \cdots, \sqrt{2}a_{n1}, a_{22}, \sqrt{2}a_{32}\cdots, a_{nn} \right]^{T} \in \mathbb{R}^{\frac{n(n+1)}{2}}.
\end{equation*}
That is, the lower triangular elements of \(A\) are stacked column-wise into a column vector, where the diagonal elements remain unchanged and the off-diagonal elements are multiplied by \(\sqrt{2}\). Under this definition, if \(A, B \in \mathbb{S}^{n}\), then the inner product \(\langle A, B \rangle = \text{svec}(A)^{T} \text{svec}(B)\).

Suppose matrices \(A\) and \(B\) are both of size \(m \times n\). The hadamard product \(A \odot B\) satisfies \((A \odot B)_{i,j} = a_{i,j}b_{i,j}\). If matrix \(C\) is of size \(m \times n\) and matrix \(D\) is of size \(mk \times n\), we define:  
\[
\text{broadcast}(C) \odot D = \left[ \underbrace{C;\cdots;C}_{k \text{ times}} \right] \odot D.
\]
\subsection{Organization}
The structure of the subsequent sections in this paper is arranged as follows. In Section \ref{sec:2}, we review the general bundle method, the spectral bundle method, the polyhedral bundle method, and the modified subproblem we proposed based on the polyhedral bundle method. In Section \ref{sec:3}, we mainly present the update process of the lower approximation model, the stopping criterion, the rank update strategy, and the complete iterative algorithm for the polyhedral bundle method. In Section \ref{sec:4}, we prove the convergence of the polyhedral bundle method for solving SDP. In Section \ref{sec:5}, we present the performance of the proposed method on random sparse SDP and the Max-Cut problem. In Section \ref{sec:6}, we summarize the entire paper and outline future research directions.
\section{Bundle method}\label{sec:2}
In this section, we first introduce the general bundle method and the exact penalty function corresponding to dual SDP (\ref{eq:Dsdp}). Then, we introduce two methods, the spectral bundle method and the polyhedral bundle method, which are derived by applying the bundle method to SDP. Finally, we present the modified subproblem proposed based on the polyhedral bundle method.
\subsection{Proximal bundle method}
In this subsection, we overview proximal bundle methods \cite{feltenmark2000dual} for solving unconstrained non-smooth convex minimization problems of the form
\begin{equation}\label{eq:F}
\underset{y\in \mathbb{R}^{m}}{\min}F(y),
\end{equation}
where \(F : \mathbb{R}^{m} \rightarrow (-\infty, \infty]\) is a proper closed convex function that attains its minimum value, \(\inf F \), on some nonempty set. The proximal bundle method solves the following model at each iteration:
\begin{equation}\label{eq:bundle}
z_{k+1}\in \underset{z\in \mathbb{R}^{m}}{\arg\min }\text{ }F_{k}(z)+\frac{1}{2t}\|z-y_{k}\|^{2},
\end{equation}
where $t>0$. The proximal bundle method updates the iterate $y_k$ only when the objective value decrease $F(\cdot)$ meets a fraction of the decrease predicted by the approximate model $F_k(\cdot)$. Given $0 < \beta < 1$, if
\begin{equation}\label{eq:decrease}
\beta(F(y_{k})-F_{k}(z_{k+1})) \leq F(y_{k})-F(z_{k+1}),
\end{equation}
one perform a descent step by setting $y_{k+1} = z_{k+1}$; otherwise, a null step keeps $y_{k+1} = y_{k}$. In either case, the subgradient $g_{k+1} \in \partial F(z_{k+1})$ at the new point $z_{k+1}$ is used to update the lower approximation $F_{k+1}(y)$.

If the new lower approximation model $F_{k+1}(y)$ satisfies the following three properties\cite{liao2023overview}, then the proximal bundle method (\ref{eq:bundle}) converges:
\begin{enumerate}
    \item Minorant:\text{ }the function $F_{k+1}(y)$ is a lower bound on $F(y)$
        \begin{equation}\label{eq:cond1}
        F_{k+1}(y)\leq F(y),\forall y\in \mathbb{R}^{m}.
        \end{equation}
    \item Subgradient lower bound:\text{ }$F_{k+1}(y)$ is lower bounded by the linearization given by some subgradient $g_{k+1} \in \partial F(z_{k+1})$ computed after (\ref{eq:bundle})
        \begin{equation}\label{eq:cond2}
        F_{k+1}(y)\geq F(z_{k+1})+\langle g_{k+1},y-z_{k+1}\rangle,\forall y\in \mathbb{R}^{m}.
        \end{equation}
    \item Model subgradient lower bound:\text{ }$F_{k+1}(y)$ is lower bounded by the linearization of the model  $F_{k}(y)$ given by the subgradient $s_{k+1} := \frac{1}{t}(y_{k}-z_{k+1}) \in \partial F_{k}(z_{k+1})$, i.e.
        \begin{equation}\label{eq:cond3}
        F_{k+1}(y)\geq F_{k}(z_{k+1})+\langle s_{k+1},y-z_{k+1}\rangle,\forall y\in \mathbb{R}^{m}.
        \end{equation}
\end{enumerate}
Algorithm \ref{alg:1} presents the iterative procedure of the General Bundle Method.
\begin{algorithm}[H]
\caption{General Bundle Method}\label{alg:1}
\begin{algorithmic}[1]
    \Require $ F(y);\text{Parameters } t>0,\beta \in (0,1);\text{maxiter}$
    \For{\(k=1:\text{maxiter}\)}
        \State \text{Solve (\ref{eq:bundle}) to obtain a candidate iterate} \(z_{k+1}\).
        \If{\ref{eq:decrease} is satisfied}
            \State Set \(y_{k+1}=z_{k+1}\)
        \Else
            \State Set \(y_{k+1}=y_{k}\)
        \EndIf
        \State Update approximation model \(F_{k+1}(y)\) without violating (\ref{eq:cond1}) to (\ref{eq:cond3}).
        \If{stopping criterion}
            \State Quit.
        \EndIf
    \EndFor
\end{algorithmic}
\end{algorithm}
\subsection{Exact penalization of dual SDP}
To apply the proximal bundle method to SDP, we consider the following non-smooth convex penalty function form for dual SDP (\ref{eq:Dsdp}),
\begin{equation}\label{eq:penalty}
\underset{y\in \mathbb{R}^{m}}{\min}F(y)=-b^{T}y+\rho\max\{\lambda_{\max}(\mathcal{A}^{T}y-C),0\}.
\end{equation}

\begin{theorem}\label{theorem:1}
The penalized non-smooth formulation (\ref{eq:penalty}) is an exact penalization for dual SDP (\ref{eq:Dsdp}). Let \(P^{\star}\) denote the set of primal optimal solutions, and let \(\rho > \mathcal{D}_{X^{\star}} = \max\limits_{X^{\star} \in P^{\star}} \text{Tr}(X^{\star})\). A point \((y^{\star}, S^{\star})\) is an optimal solution of the dual SDP (\ref{eq:Dsdp}) if and only if it is an optimal solution of (\ref{eq:penalty}).
\end{theorem}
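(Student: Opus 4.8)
The plan is to prove the two directions of the "if and only if" by exploiting exact penalization theory together with the strong duality guaranteed by Assumptions~\ref{as:1}--\ref{as:2}. The key observation is that the feasible region of the dual SDP (\ref{eq:Dsdp}) is exactly the set $\{y : \lambda_{\max}(\mathcal{A}^T y - C) \le 0\}$, so the penalty term $\rho\max\{\lambda_{\max}(\mathcal{A}^T y - C), 0\}$ vanishes precisely on the dual-feasible set and is strictly positive off it. Thus (\ref{eq:penalty}) is the standard $\ell_1$-type penalization of the constrained problem $\min\{-b^T y : \lambda_{\max}(\mathcal{A}^T y - C)\le 0\}$, and the whole theorem reduces to showing the penalty parameter $\rho > \mathcal{D}_{X^\star}$ is large enough to be \emph{exact}.

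First I would set up the convex-analytic machinery. Write $g(y) := \lambda_{\max}(\mathcal{A}^T y - C)$, a finite convex function on $\mathbb{R}^m$, and recall that its subdifferential is $\partial g(y) = \{\mathcal{A}(vv^T) : \|v\|_2 = 1,\ (\mathcal{A}^T y - C)v = \lambda_{\max}(\mathcal{A}^T y - C)v\}$, more generally $\partial g(y) = \{\mathcal{A}(W) : W \succeq 0,\ \mathrm{Tr}(W) = 1,\ \langle \mathcal{A}^T y - C, W\rangle = g(y)\}$. Under Assumption~\ref{as:1} Slater's condition holds for the dual SDP (strictly feasible $(y_0, S_0)$ with $S_0 \succ 0$ means $g(y_0) < 0$), so the KKT conditions for the constrained problem are necessary and sufficient: $(y^\star, S^\star)$ with $S^\star = C - \mathcal{A}^T y^\star$ is dual-optimal iff there is a multiplier $\mu \ge 0$ with $-b + \mu\, \partial g(y^\star) \ni 0$ and $\mu\, g(y^\star) = 0$. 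Matching this against the primal side: if $X^\star$ is primal-optimal with $\mathrm{Tr}(X^\star) = \tau^\star \le \mathcal{D}_{X^\star}$, then writing $X^\star = \tau^\star W^\star$ (with $\mathrm{Tr}(W^\star)=1$, allowing $\tau^\star = 0$ handled separately), complementary slackness $\langle X^\star, S^\star\rangle = 0$ gives $\langle \mathcal{A}^T y^\star - C, W^\star\rangle = 0 = g(y^\star)$ when $W^\star$ is supported on the kernel of $S^\star$, and $\mathcal{A}(X^\star) = b$ gives $\tau^\star \mathcal{A}(W^\star) = b$, i.e. $b \in \tau^\star\,\partial g(y^\star)$. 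Hence $\mu = \tau^\star$ is a valid KKT multiplier for the constrained dual.

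Next I would run the standard exact-penalty argument in both directions. ($\Rightarrow$) Suppose $(y^\star, S^\star)$ solves (\ref{eq:Dsdp}). By the above it is a KKT point with multiplier $\mu = \tau^\star \le \mathcal{D}_{X^\star} < \rho$. A classical result on exact penalization (e.g.\ via the subgradient sum rule: $0 \in \partial(-b^T\cdot)(y^\star) + \rho\,\partial(\max\{g,0\})(y^\star)$ follows because $\partial(\max\{g,0\})(y^\star) = [0,1]\cdot\partial g(y^\star)$ on the boundary $g(y^\star)=0$, and $b/\rho = (\tau^\star/\rho)\mathcal{A}(W^\star)$ lies in this set since $\tau^\star/\rho < 1$) shows $y^\star$ is a global minimizer of the convex function $F$. ($\Leftarrow$) Conversely suppose $y^\star$ minimizes $F$. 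I would first argue $y^\star$ must be dual-feasible: if $g(y^\star) > 0$, then using any primal-optimal $X^\star$ and the duality-gap inequality $-b^T y \ge -\langle C, X^\star\rangle$... — more carefully, pick the strictly feasible $y_0$ and compare $F$ along the segment $[y_0, y^\star]$; since on the feasible side $F = -b^T y$ is bounded below by the common optimal value $p^\star$ (by weak duality, $-b^Ty \ge -\mathrm{Tr}(CX) $ ... ), while the directional derivative of the penalty at any infeasible point dominates that of $-b^Ty$ once $\rho > \mathcal{D}_{X^\star} \ge$ the relevant dual-norm bound, one gets a contradiction to minimality. Once $y^\star$ is feasible, $F(y^\star) = -b^T y^\star$, and minimality of $F$ plus $F \le -b^T\cdot$ on the feasible set forces $-b^T y^\star \le -b^T y$ for all feasible $y$, so $(y^\star, C - \mathcal{A}^T y^\star)$ is dual-optimal.

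\textbf{The main obstacle} I anticipate is making the infeasibility-contradiction in the ($\Leftarrow$) direction fully rigorous with the \emph{sharp} threshold $\rho > \mathcal{D}_{X^\star}$ rather than some loose Lipschitz constant: the clean way is to use strong duality to write, for any $y$ with $S = C - \mathcal{A}^T y \not\succeq 0$, the estimate $F(y) = -b^Ty + \rho\max\{\lambda_{\max}(-S),0\} \ge \langle C - \mathcal{A}^T y, X^\star\rangle - b^Ty + \rho\,\lambda_{\max}(-S)\cdot\mathbf{1}[\lambda_{\max}(-S)>0]$ — wait, cleaner: $-b^T y = -\langle \mathcal{A}X^\star, y\rangle = -\langle X^\star, \mathcal{A}^T y\rangle = \langle X^\star, S\rangle - \langle X^\star, C\rangle = \langle X^\star, S\rangle + p^\star$ (where $p^\star = -\langle C, X^\star\rangle$ is, up to sign, the common optimal value), and $\langle X^\star, S\rangle \ge \lambda_{\min}(S)\,\mathrm{Tr}(X^\star) \ge -\max\{-\lambda_{\min}(S),0\}\,\mathcal{D}_{X^\star} = -\lambda_{\max}(-S)\,\mathcal{D}_{X^\star}$ when $S\not\succeq0$; hence $F(y) \ge p^\star + (\rho - \mathcal{D}_{X^\star})\max\{\lambda_{\max}(-S),0\} \ge p^\star = F(y^\star)$ for dual-optimal $y^\star$, with strict inequality whenever $y$ is infeasible. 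This single chain of inequalities in fact proves \emph{both} directions at once — it shows every dual-optimal point achieves the value $p^\star$ which is the global minimum of $F$, and that no infeasible point can attain it — so I would organize the final proof around this estimate, treating the degenerate case $\mathrm{Tr}(X^\star)=0$ (forcing $b=0$, $p^\star=0$) as an easy special case.
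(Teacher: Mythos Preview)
The paper does not actually supply its own proof of Theorem~\ref{theorem:1}: immediately after the statement it refers the reader to \cite{ding2023revisiting} for the argument. So there is no in-paper proof to compare against.

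That said, your proposal is correct, and in fact the chain of inequalities you isolate in the final paragraph is exactly the clean way to do this. Fixing any primal optimal $X^{\star}$ and writing $S=C-\mathcal{A}^{T}y$, the identity
\[
-b^{T}y \;=\; -\langle X^{\star},\mathcal{A}^{T}y\rangle \;=\; \langle X^{\star},S\rangle-\langle C,X^{\star}\rangle
\]
combined with $\langle X^{\star},S\rangle\ge\lambda_{\min}(S)\,\mathrm{Tr}(X^{\star})$ (valid since $X^{\star}\succeq0$) gives
\[
F(y)\;\ge\;-\langle C,X^{\star}\rangle+\bigl(\rho-\mathrm{Tr}(X^{\star})\bigr)\max\{\lambda_{\max}(\mathcal{A}^{T}y-C),0\}\;\ge\;-\langle C,X^{\star}\rangle,
\]
with the last inequality strict whenever $y$ is dual-infeasible (since $\rho>\mathcal{D}_{X^{\star}}\ge\mathrm{Tr}(X^{\star})$), and with equality exactly when $y$ is dual-feasible and $\langle X^{\star},S\rangle=0$, i.e.\ when $y$ is dual-optimal. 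This single estimate indeed delivers both directions simultaneously, so you can drop the preliminary KKT discussion entirely and present the proof as this computation; the earlier paragraphs are correct in spirit but unnecessary. Two small cleanups: the strictly feasible dual point $(y_{0},S_{0})$ comes from Assumption~\ref{as:2}, not Assumption~\ref{as:1}; and your description of $\partial g(y)$ should be the convex hull of the rank-one terms, though you do not actually use it in the final argument.
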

Theorem \ref{theorem:1} shows that the penalty function (\ref{eq:penalty}) is an exact penalty function for the dual problem (\ref{eq:Dsdp}). Under this theorem, the selection of the penalty parameter \(\rho\) must be based on \(\mathcal{D}_{X^{\star}}\), and this selection depends on \(\mathcal{D}_{X^{\star}}\) as follows. If \(\mathcal{D}_{X^{\star}}\) is known, the value of \(\rho\) that satisfies \(\rho > \mathcal{D}_{X^{\star}}\) can be directly selected. If \(\mathcal{D}_{X^{\star}}\) is unknown, judgment must be made in combination with the structure of the problem's solution \(X^{\star}\), and this judgment needs to consider the structure of \(X^{\star}\) specifically. If \(X^{\star}\) has no special structure, a larger \(\rho\) can be selected to ensure \(\rho > \mathcal{D}_{X^{\star}}\). If \(X^{\star}\) has a special structure, \(\mathcal{D}_{X^{\star}}\) can be precomputed first, and then an appropriate \(\rho\) can be selected based on this. To deeply study Theorem \ref{theorem:1} and its proof, one can refer to the literature \cite{ding2023revisiting}.

Constructing the lower approximation model \( F_{k}(y) \) requires calculating the subgradient of \( F(y) \), let \(S = C-\mathcal{A}^{T}y\), The subdifferential of the penalty function \(F(y)\)\cite{ding2021optimal} is given by
\begin{equation*}
\partial F(y)=
\begin{cases} 
-b+\textbf{conv}\{\rho\mathcal{A}(vv^{T})|Sv=\lambda_{\min}(S)v\}, &\lambda_{\min}(S)< 0, \\
-b,       & \lambda_{\min}(S)> 0, \\
-b+\beta\textbf{conv}\{\rho\mathcal{A}(vv^{T})|Sv=\lambda_{\min}(S)v,\beta\in[0,1]\}, & \lambda_{\min}(S)= 0.
\end{cases}
\end{equation*}
Here we allow the smallest eigenvalue \(\lambda_{\min}(S)\) of \(S\) to have multiplicity \( r\), where \(v\) is the eigenvector corresponding to \(\lambda_{\min}(S)\). 
\subsection{Spectral bundle method}
To obtain a lower approximation model for \(F(y)\) and derive the corresponding spectral bundle method, one can rewrite penalty function (\ref{eq:penalty}) based on the following facts:
\begin{equation*}
\max\{\lambda_{\max}(\mathcal{A}^{T}y-C),0\}=\underset{X\in \mathbb{S}^{n}_{+},\text{Tr}(X)\leq 1}{\max}\langle X,\mathcal{A}^{T}y-C\rangle.
\end{equation*}
Thus, the penalty problem (\ref{eq:penalty}) can be rewritten as:
\begin{equation}\label{eq:sub1}
\begin{aligned}
\underset{y\in \mathbb{R}^{m}}{\min}F(y)&=-b^{T}y+\rho\max\{\lambda_{\max}(\mathcal{A}^{T}y-C),0\}\\
&=-b^{T}y+\rho\underset{X\in \mathbb{S}^{n}_{+},\text{Tr}(X)\leq 1}{\max}\langle X,\mathcal{A}^{T}y-C\rangle\\ 
&=-b^{T}y+\underset{X\in \mathbb{S}^{n}_{+},\text{Tr}(X)\leq \rho}{\max}\langle X,\mathcal{A}^{T}y-C\rangle\\ 
&=-b^{T}y+\underset{X\in \mathcal{W}}{\max}\langle X,\mathcal{A}^{T}y-C\rangle,
\end{aligned}
\end{equation}
where \(\mathcal{W}=\{X\in \mathbb{S}^{n}_{+},\text{Tr}(X)\leq \rho\}\), However, problem (\ref{eq:sub1}) remains challenging to solve directly. Nevertheless, one can derive a lower approximation model for \( F(y) \) by selecting an appropriate subset of \( \mathcal{W} \). Specifically, if \(\mathcal{W}_{1} = \{ PSP^T \mid P \in \mathbb{R}^{n \times r},\, P^T P = I,\, S \in \mathbb{S}^r_+,\, \text{Tr}(S) \leq \rho \} \subset \mathcal{W}\) is chosen, then the lower approximation model of the \(k\)-th iteration can be written as:
\begin{equation*}
F_{\mathcal{W}_{1,k}}(y)=-b^{T}y+\underset{X\in \mathcal{W}_{1,k}}{\max}\langle X,\mathcal{A}^{T}y-C\rangle\leq-b^{T}y+\underset{X\in \mathcal{W}}{\max}\langle X,\mathcal{A}^{T}y-C\rangle=F(y),\forall y\in \mathbb{R}^{m}.
\end{equation*}
To enhance the approximation capability of the lower approximation model, an additional term can be added on the basis of \(\mathcal{W}_1\) to aggregate historical gradient information and avoid information loss, and its impact on computational efficiency is negligible. Consequently, \(\mathcal{W}_2\) is defined as \(\{ \eta W + PSP^T \mid W \in \mathbb{S}^n_+, \, \text{Tr}(W) = 1, \, P \in \mathbb{R}^{n \times r}, \, P^T P = I, \, \eta \geq 0, \, S \in \mathbb{S}^r_+, \, \eta + \text{Tr}(S) \leq \rho \}\). When \(\eta = 0\), \(\mathcal{W}_1 = \mathcal{W}_2\), thus \(\mathcal{W}_1\) is a special case of \(\mathcal{W}_2\). \(\mathcal{W}_2\) has stronger approximation capability and is a fundamental element in the derivation of the spectral bundle method, which satisfies \(\mathcal{W}_1 \subset \mathcal{W}_2 \subset \mathcal{W}\). Moreover, at the \(k\)-th iteration, it satisfies:
\begin{equation*}
F_{\mathcal{W}_{1,k}}(y)\leq F_{\mathcal{W}_{2,k}}(y)\leq F(y),\forall y\in \mathbb{R}^{m}.
\end{equation*}
Therefore, at the k-th iteration, the spectral bundle method solves the following problem:
\begin{equation}\label{eq:sub2}
\begin{aligned}
\underset{z\in\mathbb{R}^{m}}{\min}\hat{F}_{\mathcal{W}_{2,k}}(z)&=\underset{z\in\mathbb{R}^{m}}{\min}F_{\mathcal{W}_{2,k}}(z)+\frac{1}{2t_{k}}\|z-y_{k}\|^{2}\\
&=\underset{z\in\mathbb{R}^{m}}{\min}-b^{T}z+\underset{\eta\geq0,S\in \mathbb{S}^r_+,\eta+\text{Tr}(S) \leq \rho}{\max}\langle\eta W_{k} + P_{k}SP_{k}^T, \mathcal{A}^{T}z-C\rangle+\frac{1}{2t_{k}}\|z-y_{k}\|^{2}\\
&=\underset{z\in\mathbb{R}^{m}}{\min}\underset{\eta\geq0,S\in \mathbb{S}^r_+,\eta+\text{Tr}(S) \leq \rho}{\max}-b^{T}z+\langle\eta W_{k} + P_{k}SP_{k}^T, \mathcal{A}^{T}z-C\rangle+\frac{1}{2t_{k}}\|z-y_{k}\|^{2}\\ 
&=\underset{\eta\geq0,S\in \mathbb{S}^r_+,\eta+\text{Tr}(S) \leq \rho}{\max}\underset{z\in\mathbb{R}^{m}}{\min}-b^{T}z+\langle\eta W_{k} + P_{k}SP_{k}^T, \mathcal{A}^{T}z-C\rangle+\frac{1}{2t_{k}}\|z-y_{k}\|^{2}\\ 
&=\underset{\tilde{W}\in \mathcal{W}_{2,k}}{\min}\frac{t_{k}}{2}\| \mathcal{A}(\tilde{W})-b\|^{2}+\langle \tilde{W}, C-\mathcal{A}^{T}y_{k}\rangle+b^{T}y_{k}.
\end{aligned}
\end{equation} 
The above problem is a quadratic SDP problem, where the derivation of the last equality utilizes the optimality condition for the inner minimization with respect to the variable \(z\): \(z - y_k = t_k(b - \mathcal{A}(\eta W_k + P_k S P_k^T))\). The actual variables of Problem (\ref{eq:sub2}) are \((\eta, S)\). By solving Problem (\ref{eq:sub2}), the solution \((\eta_k, S_k)\) is obtained, and thus \(\tilde{W}_k = \eta_k W_k + P_k S_k P_k^T\), while \(z_{k+1} = y_k + t_k(b - \mathcal{A}(\eta W_k + P_k S P_k^T)) = y_k + t_k(b - \mathcal{A}(\tilde{W}_k))\). Although Problem (\ref{eq:sub2}) is a quadratic SDP, since its positive semidefinite variable \(S \in \mathbb{S}_+^r\) and \(r\) is small, it can be efficiently solved using interior point methods, such as Mosek. The update of \(F_{\mathcal{W}_{2,k+1}}(y)\) can be referred to \cite{helmberg2000spectral,ding2023revisiting,liao2023overview}. 
\subsection{Polyhedral Bundle method}
Although the dimension of the positive semidefinite variable is usually small, the polyhedral bundle method may yield better results for some special problems. By analogy with \(\mathcal{W}_1\) and \(\mathcal{W}_2\), \(\mathcal{W}_3 = \{ P\text{diag}(x)P^T \mid P \in \mathbb{R}^{n \times l},\, x \geq 0,\, \mathbf{1}^T x \leq \rho \}\) and \(\mathcal{W}_4 = \{ \eta W + P\text{diag}(x)P^T \mid P \in \mathbb{R}^{n \times l},\, \eta \geq 0,\, x \geq 0,\, \eta + \mathbf{1}^T x \leq \rho \}\) **are defined**, which satisfy \(\mathcal{W}_3 \subset \mathcal{W}_4 \subset \mathcal{W}\). Here, the number of bundles \(l\) is crucial to the problem, and its impact on the problem is reflected in two aspects. On the one hand, if the number of bundles is insufficient, the approximation capability of the model will be poor. On the other hand, if the number of bundles is excessive, the computational complexity of solving the subproblem will increase. Thus, reasonably controlling the upper bound of the number of bundles for different problems is an issue worth exploring. By adopting \(\mathcal{W}_4\), the following lower approximation model at the \(k\)-th iteration can be derived:
\begin{equation*}
F_{\mathcal{W}_{4,k}}(y)=-b^{T}y+\underset{\eta \geq 0, x \geq 0, \eta + \mathbf{1}^{T}x \leq \rho}{\max}\langle \eta W_{k} + P_{k}\text{diag}(x)P_{k}^{T}, \mathcal{A}^{T}y-C\rangle\leq F(y),\forall y\in \mathbb{R}.
\end{equation*}
Therefore, at the k-th iteration, the Polyhedral bundle method solves the following problem:
\begin{equation}\label{eq:sub3}
\begin{aligned}
\underset{z\in\mathbb{R}^{m}}{\min}\hat{F}_{\mathcal{W}_{4,k}}(z)&=
\underset{z\in\mathbb{R}^{m}}{\min}F_{\mathcal{W}_{4,k}}(z)+\frac{1}{2t_{k}}\|z-y_{k}\|^{2}\\
&=\underset{z\in\mathbb{R}^{m}}{\min}-b^{T}z+\underset{\eta \geq 0, x \geq 0, \eta + \mathbf{1}^{T}x \leq \rho}{\max}\langle\eta W_{k} + P_{k}\text{diag}(x)P_{k}^{T}, \mathcal{A}^{T}z-C\rangle+\frac{1}{2t_{k}}\|z-y_{k}\|^{2}\\
&=\underset{z\in\mathbb{R}^{m}}{\min}\underset{\eta \geq 0, x \geq 0, \eta + \mathbf{1}^{T}x \leq \rho}{\max}-b^{T}z+\langle\eta W_{k} + P_{k}\text{diag}(x)P_{k}^{T}, \mathcal{A}^{T}z-C\rangle+\frac{1}{2t_{k}}\|z-y_{k}\|^{2}\\ 
&=\underset{\eta \geq 0, x \geq 0, \eta + \mathbf{1}^{T}x \leq \rho}{\max}\underset{z\in\mathbb{R}^{m}}{\min}-b^{T}z+\langle\eta W_{k} + P_{k}\text{diag}(x)P_{k}^{T}, \mathcal{A}^{T}z-C\rangle+\frac{1}{2t_{k}}\|z-y_{k}\|^{2}\\ 
&=\underset{\tilde{W}\in \mathcal{W}_{4,k}}{\min}\frac{t_{k}}{2}\| \mathcal{A}(\tilde{W})-b\|^{2}+\langle \tilde{W}, C-\mathcal{A}^{T}y_{k}\rangle+b^{T}y_{k}.
\end{aligned}
\end{equation}
The above problem is a quadratic programming problem, where the derivation of the last equality utilizes the optimality condition for the inner minimization with respect to the variable \(z\): \(z - y_k = t_k\left(b - \mathcal{A}\left(\eta W_k + P_k \text{diag}(x) P_k^T\right)\right)\). The actual variables of Problem (\ref{eq:sub3}) are \(\eta\) and \(x\). By solving Problem (\ref{eq:sub3}), the solution \((\eta_k, x_k)\) is obtained, and thus \(\tilde{W}_k = \eta_k W_k + P_k \text{diag}(x_k) P_k^T\), while \(z_{k+1} = y_k + t_k\left(b - \mathcal{A}\left(\eta W_k + P_k \text{diag}(x_k) P_k^T\right)\right) = y_k + t_k\left(b - \mathcal{A}(\tilde{W}_k)\right)\). \(u = [\eta; x]\) is defined, where \(\eta \in \mathbb{R}_{+}\), \(x \in \mathbb{R}^{l}_{+}\), \(u \in \mathbb{R}^{l+1}_{+}\) and \(\mathbf{1}^T u \leq \rho\). Suppose \(v_{k,1}, \ldots, v_{k,l}\) are the columns of \(P_k\). By simplifying the above equation, one can obtain
\begin{equation*}
\bar{a}_k = \langle C, W_k \rangle \in \mathbb{R}\text{  and  }  \bar{B}_k = \mathcal{A}(W_k) \in \mathbb{R}^{m}. 
\end{equation*}
For \( i = 1, \ldots, l \), we have
\begin{equation}\label{eq:cons1}
\hat{a}_{k}=[\hat{a}_{k,1};\cdots;\hat{a}_{k,l}]=[\langle C,v_{k,1}(v_{k,1})^{T}\rangle;\cdots;
\langle C,v_{k,l}(v_{k,l})^{T}\rangle],
\end{equation}
and
\begin{equation}\label{eq:cons2}
\hat{B}_{k}=[B_{k,1},\cdots,B_{k,l}]=[\mathcal{A}(v_{k,1}(v_{k,1})^{T}),\cdots,\mathcal{A}(v_{k,l}(v_{k,l})^{T})].
\end{equation}
Therefore, we have \( a_{k} = [\bar{a}_k; \hat{a}_{k}] \) and \( B_{k} = [\bar{B}_k, \hat{B}_{k}] \). Using the above definitions, we can transform Problem (\ref{eq:sub3}) into the following quadratic programming problem:
\begin{equation*}
\underset{z\in\mathbb{R}^{m}}{\min}\hat{F}_{\mathcal{W}_{4,k}}(y)=\underset{u\geq0,\mathbf{1}^T u \leq \rho}{\min}\frac{1}{2}u^{T}(t_{k}B_{k}^{T}B_{k})u+(a_{k}-B_{k}^{T}(y_{k}+t_{k}b))^{T}u.
\end{equation*}
Then \(z_{k+1} = y_{k} + t_{k}(b - B_{k}u_{k})\), so the above iteration can be transformed into:
\begin{equation}\label{eq:sub4}
\begin{aligned}
u_{k}&=\underset{u\geq0,\mathbf{1}^T u \leq \rho}{\arg\min}\frac{1}{2}u^{T}(t_{k}B_{k}^{T}B_{k})u+(a_{k}-B_{k}^{T}(y_{k}+t_{k}b))^{T}u\\ 
z_{k+1}&=y_{k}+t_{k}(b-B_{k}u_{k}).
\end{aligned}
\end{equation}
The advantage of Problem (\ref{eq:sub4}) lies in that it is a quadratic programming problem with low solution difficulty. However, its core difficulty lies in controlling the number of bundles \( l \). If \( l \) is too small, the approximation capability of the lower approximation model will be insufficient, and it may fail to reach the preset convergence accuracy within the given time or number of iterations. If \( l \) is too large, it will reduce the computational efficiency of the subproblem, thereby prolonging the overall solution time. In addition, if no restriction is imposed on the number of bundles \( l \), \( l \) will continue to increase with the iteration process, further continuously reducing the solution efficiency of the subproblem. Therefore, the number of bundles \(l\) should be balanced to ensure both convergence accuracy and solution efficiency. Setting an appropriate upper bound \(l_{\max}\) helps enhance the overall computational efficiency without compromising accuracy.
\subsection{The modified polyhedral bundle method}\label{subsec2.5}
In practical experiments, we need to make the following modifications to problem (\ref{eq:sub4}):
\begin{equation}\label{eq:sub5}
\begin{aligned}
u_{k}&=\underset{u\geq0,\mathbf{1}^T u \leq \rho}{\arg\min}\frac{1}{2}u^{T}(t_{k}B_{k}^{T}B_{k}+\xi I)u+(a_{k}-B_{k}^{T}(y_{k}+t_{k}b))^{T}u\\ 
z_{k+1}&=y_{k}+t_{k}(b-B_{k}u_{k}),
\end{aligned}
\end{equation}
where \(\xi \geq 0\). The advantage of subproblem (\ref{eq:sub5}) over subproblem (\ref{eq:sub4}) lies in that the quadratic term matrix \(t_{k}B_{k}^{T}B_{k}\) is a positive semi-definite matrix, which may thus be singular, leading to numerical instability. Therefore, adding a small regularization term \(\xi I\) can ensure the stability of the solution to a certain extent, and then methods such as the interior point method\cite{aps2019mosek}, active set method\cite{gu2024random}, or a partial first-order affine scaling method\cite{gu2019partial} can be used for solving. The parameter \(\xi\) should not be too large, otherwise the solution will deviate too much from that of the original problem. If it is too small, it may cause problems in the adjustment of the step size parameter \(t_{k}\). We present the dual problem of Problem (\ref{eq:sub5}):
\begin{equation}\label{eq:sub6}
\begin{aligned}
&\underset{y\in R^{m}}{\min}\frac{1}{2t_{k}}\|y-y^{k}-t_{k}b\|_{2}^{2}+\frac{\xi}{2t_{k}^{2}}(y-y^{k}-t_{k}b)^{T}B(B^{T}B)^{-2}B^{T}(y-y^{k}-t_{k}b)\\ 
&\text{  s.t. }B^{T}y+t_{k}^{-1}\xi(B^{T}B)^{-1}B^{T}(y-y^{k}-t_{k}b)-a\leq0
\end{aligned}
\end{equation}
\begin{figure}
 \centering
 \subfigure[\(\xi=0\)]{\label{215.1} 
 \includegraphics[width=2.7in]{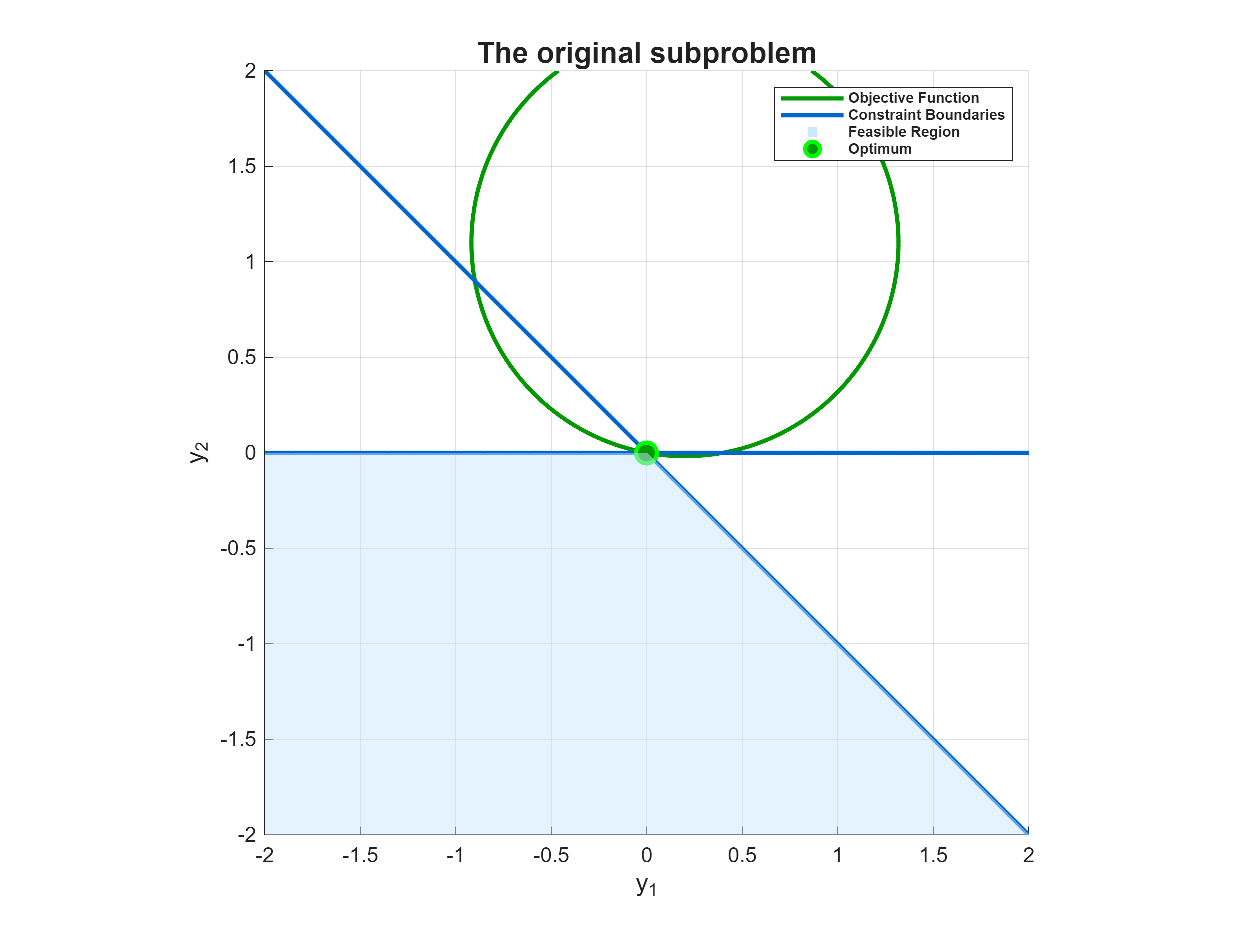}}
 \hspace{0.01in}
 \subfigure[\(\xi=0.1\)]{\label{215.2} 
 \includegraphics[width=2.7in]{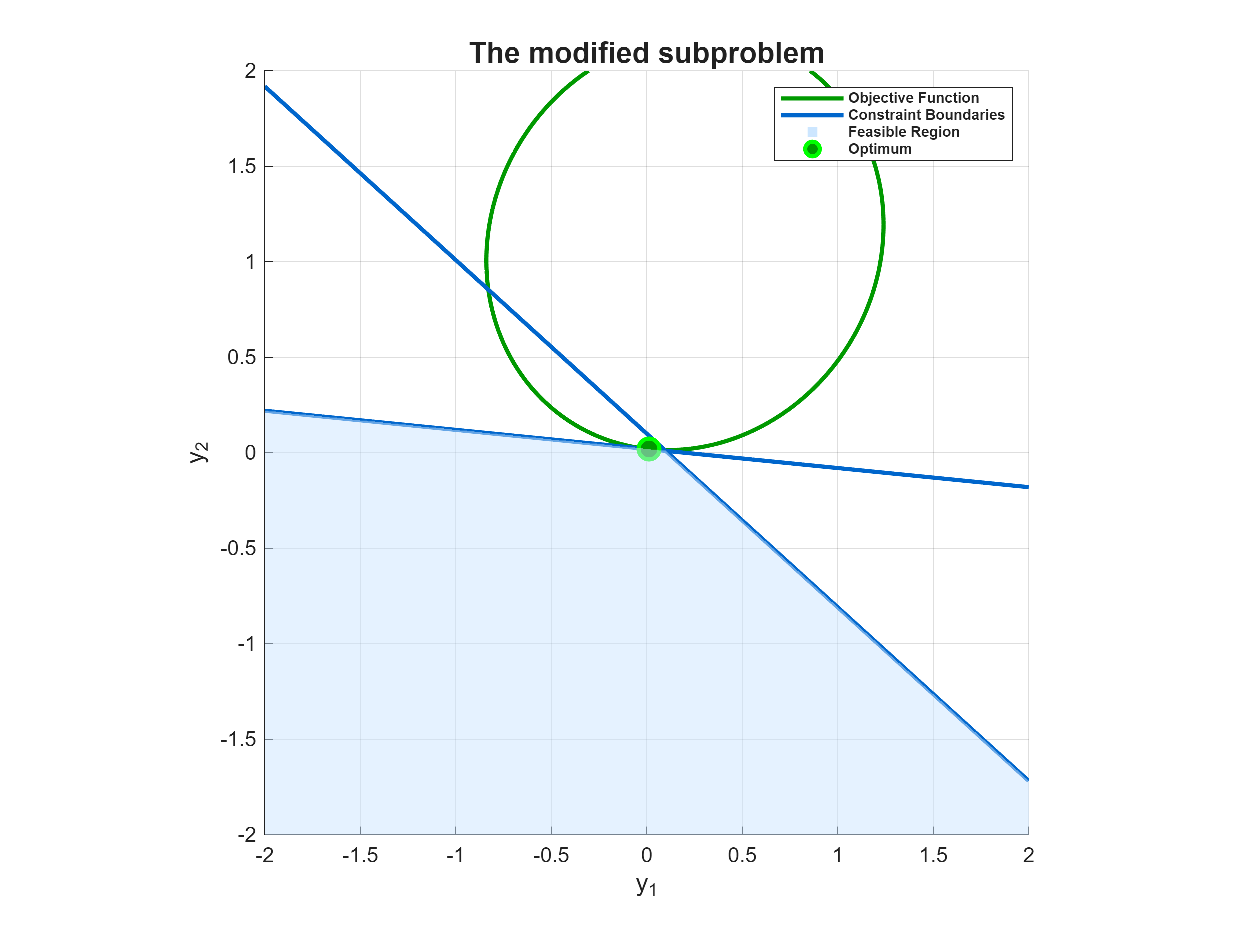}}
 \caption{Relationship between the properties of the solution to Subproblem (\ref{eq:sub6}) and Parameter \(\xi\), where the problem parameters are set as follows: \(t_{k} = 1,y_{k} = [0.1; 1],b = [0.1;0.1],B = [0,1;1,1],a = [0;0].\) The blue lines represent the constraints, the light blue area denotes the feasible region, and the green curve is the contour curve corresponding to the objective function and the optimal solution (marked by the green circle).}\label{fig:4} 
\end{figure}

From Figure \ref{fig:4}, it can be seen that when the parameter \(\xi = 0\), both constraints at the optimal solution are satisfied but not strictly satisfied. Therefore, both of these constraints may be retained. On the other hand, when the parameter \(\xi = 0.1\), one constraint at its optimal solution is strictly satisfied. When a certain threshold is set, this constraint can be removed. Thus, for Subproblem (\ref{eq:sub5}), the reasonable setting of parameter \(\xi\) can reduce the number of constraints in the iteration process to a certain extent, thereby improving the solution efficiency of the subproblem.  

Here, we construct a simple SDP problem to demonstrate the necessity of setting parameter \(\xi\). The parameters are set as follows: \(n = 3\), \(m = 3\), and \(r^{\star} = 2\) (where the rank of the optimal solution for the primal variable is 2). Since the constructed problem satisfies the strict complementary slackness condition, the rank of the optimal solution for the dual variable is 1. Experiments are conducted with parameters \(\xi = 0\), \(10^{-12}\), \(10^{-10}\), \(10^{-8}\), \(10^{-6}\), and \(10^{-4}\) respectively. The maximum number of iterations is set to 500. If numerical instability occurs, it is recorded as \(\textbf{Fail}\).  
\begin{table}
    \centering    
    \caption{The effect of parameter \(\xi\) on the number of iterations}\label{t1}
    \setlength\tabcolsep{0.9pt}
    \begin{tabular*}{\textwidth}{@{\extracolsep\fill}ccccccccc}
\toprule 
~&~&~& \multicolumn{6}{c}{number of iterations}  \\ 
\cmidrule{4-9}
n&m&\(r^{\star}\)&\(\xi=0\)&\(\xi=10^{-12}\)&\(\xi=10^{-10}\)&\(\xi=10^{-8}\)&\(\xi=10^{-6}\)&\(\xi =10^{-4}\)\\
\midrule    
3&3&2&\textbf{Fail} &10 &10 &12 &\textbf{Fail} &500 \\
\hline
\end{tabular*}
\end{table}

As shown in Table \ref{t1}, for this problem, numerical instability occurs when the term \(\xi I\) is not added. On the other hand, when \(\xi\) is too large, it will cause a significant deviation between the solution of the modified subproblem and the atomic problem, leading to a substantial increase in the number of iterations and failure to converge. However, a larger \(\xi\) can minimize the singularity of the quadratic term as much as possible. Without significantly affecting the number of iterations, in subsequent experiments, we adopt \(\xi=10^{-8}\) as the correction parameter, which not only ensures numerical stability but also guarantees that the deviation of the modified subproblem from the atomic problem can be neglected when the solution accuracy is relatively low.
\section{Algorithms}\label{sec:3}
In this section, we shall expound in detail on the following four aspects. First, it refers to the update of the lower approximation model corresponding to the modified subproblem (\ref{eq:sub5}). Second, it denotes the convergence criterion of the algorithm \ref{alg:3}. Third, it stands for the prediction algorithm \ref{alg:4} for the rank of the optimal solution. Fourth, it represents the complete iterative algorithm \ref{alg:3} for the modified subproblem (\ref{eq:sub5}). Among the aforementioned aspects, the update process of the lower approximation model corresponding to the modified subproblem (\ref{eq:sub5}) bears resemblance to that of the spectral bundle method.
\subsection{Lower approximation model update for modified polyhedral bundle method}
In the polyhedral bundle method, the core of updating the lower approximation model lies in the iterative update of variables \(W_{k}\) and \(P_{k}\). The variable \(W_{k}\) is used to aggregate historical gradient information, while \(P_{k}\) represents the currently active gradient information. Within the framework of the polyhedral bundle method, After solving the lower approximation model composed of \(W_{k}\) and \(P_{k}\), a new candidate point \(z_{k+1}\) can be obtained. Based on this new candidate point, the latest gradient information is incorporated into \(P_{k}\), while gradient information with relatively lower activity is aggregated into \(W_{k}\). Subsequently, new variables \(W_{k+1}\), \(P_{k+1}\), and the lower approximation model \(F_{\mathcal{W}_{4,k+1}}(y)\) are constructed. In the subsequent discussions, unless otherwise specified, we define \(F_{k+1}(y) =F_{\mathcal{W}_{4,k+1}}(y)\). Given that the rank \(r^{\star}\) of the optimal solution and the upper bound \(l_{\max}\) of the number of bundles are known, the specific update process of the polyhedral bundle method is as follows, with detailed explanations provided below.
    
Given \(W_k, P_k\), solving the lower approximation model (\ref{eq:sub5}) yields \(u_k = (\eta_k; x_k)\) and the candidate point \(z_{k+1}\), where \(\eta_k\) denotes the weight of historical gradient information, and each element \((x_k)_i\) in \(x_k\) represents the weight of the corresponding bundle \((P_k)_i\). The smaller its value, the smaller the contribution of the corresponding bundle to the model, and such a bundle should be aggregated into \(W_k\) to obtain \(W_{k+1}\). Meanwhile, the polyhedral bundle method computes the eigenvectors corresponding to the smallest \(r^{\star}\) eigenvalues of \(S_{k+1}=C-\mathcal{A}^T z_{k+1}\), and adds them to \(P_k\) to form \(P_{k+1}\). In this process, it is necessary to ensure that the number of bundles after update, \(l_{k+1}\), does not exceed \(l_{\max}\).

Since the number of gradient information added in each iteration is \(r^{\star}\), two cases need to be considered. In the first case, if \(l_{k}\leq l_{\max}-r^{\star}\), then \(l_{k + 1}\leq l_{\max}\) can be satisfied without aggregating gradients. In the second case, if \(l_{k}>l_{\max}-r^{\star}\), at least \(l_{k}-(l_{\max}-r^{\star})\) bundles with the smallest weights need to be aggregated to ensure \(l_{k+1}\leq l_{\max}\).

In the first case, we define the set of positions of the bundles to be aggregated as \(\{i|(x_{k})_{i}\leq\gamma_{1},i\in\{1,\cdots,l_{k}\}\}\), where \(\gamma_{1}\geq0\) is a relatively small parameter. In the second case, we define the set of positions of the bundles to be aggregated as the union of \(\{i|(x_{k})_{i}\leq\gamma_{2},i\in\{1,\cdots,l_{k}\}\}\) and the set composed of the positions corresponding to \(l_{k}-(l_{\max}-r^{\star})\) bundles with the smallest weights, where \(\gamma_{2}\geq0\) is a relatively small parameter, we set \(\gamma_1 = 10^{-6}\) and \(\gamma_{2}= 10^{-7}\).

Therefore, we divide all the bundles into two types: the set of positions of the bundles to be aggregated \(\bar{p}_{k}\) and the set of positions of the retained bundles \(\hat{p}_{k}\). These sets respectively correspond to the matrices \(\bar{P}_{k} = P_{k}(:, \bar{p}_{k})\) and \(\hat{P}_{k} = P_{k}(:, \hat{p}_{k})\). The matrix \(P_k(:, \bar{p}_k)\) is formed by the columns of \(P_k\) corresponding to the positions in \(\bar{p}_k\), and \(P_k(:, \hat{p}_k)\) is defined in the same way. The vectors \(\bar{x}_{k} = x_{k}(\bar{p}_{k})\) and \(\hat{x}_{k} = x_{k}(\hat{p}_{k})\) are composed of the elements in \(x_k\) at the positions corresponding to \(\bar{p}_k\) and \(\hat{p}_k\), respectively. By denoting the size of the set \(\bar{p}_{k}\) as \(d_{k}\), we can obtain \(W_{k+1}\) through the following formula:

\begin{equation}\label{eq:update1}
W_{k+1}=\frac{\eta_{k}W_{k}+\bar{P}_{k}\text{diag}(\bar{x}_{k})\bar{P}_{k}^{T}}{\eta_{k}+\textbf{1}^{T}\bar{x}_{k}}.
\end{equation}
Simultaneously, update \(\bar{a}_{k+1}\) and \(\bar{B}_{k+1}\):
\begin{equation}\label{eq:update2}
\bar{a}_{k+1}=\frac{\eta_{k}\bar{a}_{k}+a_{k}(\bar{p}_{k})^{T}\bar{x}_{k}}{\eta_{k}+\textbf{1}^{T}\bar{x}_{k}}
\text{ and }
\bar{B}_{k+1}=\frac{\eta_{k}\bar{B}_{k}+B_{k}(:,\bar{p}_{k})\bar{x}_{k}}{\eta_{k}+\textbf{1}^{T}\bar{x}_{k}}.
\end{equation}

If \(\eta_{k}\) and \(\bar{x}_{k}\) are both zero, then \(W_{k+1}=0\), \(\bar{a}_{k+1}=0\), \(\bar{B}_{k+1}=0\). Meanwhile, denote the number of bundles at this time as \(l_{k+\frac{1}{2}} = l_{k} - d_{k}\).

Calculate the smallest \(r^{\star}\) eigenvalues and their corresponding eigenvectors \(V_{k+1}\) of \(S_{k+1}=C-\mathcal{A}^{T}z_{k+1}\), and then construct the bundles required for the \((k+1)\)-th iteration:
\begin{equation}\label{eq:update3}
P_{k+1}=[\hat{P}_{k},V_{k+1}]. 
\end{equation}
Update the number of bundles \(l_{k+1} = l_{k+\frac{1}{2}} + r^{\star}\), compute the data \(a_{k+1}^\star, B_{k+1}^\star\) corresponding to \(V_{k+1}\) via Formula (\ref{eq:cons1}) and (\ref{eq:cons2}), and then update \(\hat{a}_{k+1}, a_{k+1}\):
\begin{equation}\label{eq:update4}
\hat{a}_{k+1} = [a_{k}(\hat{p}_{k});a_{k+1}^{\star}], a_{k+1} = [\bar{a}_{k+1}; \hat{a}_{k+1}],
\end{equation}
and \(\hat{B}_{k+1}, B_{k+1}\):
\begin{equation}\label{eq:update5}
\hat{B}_{k+1} = [ B_{k}(:,\hat{p}_{k}), B_{k+1}^{\star}], B_{k+1} = [\bar{B}_{k+1},\hat{B}_{k+1}].
\end{equation}
Next, we illustrate how to compute the corresponding \(a\) and \(B\) given the bundle \(P\in \mathbb{R}^{n \times l}\):
\begin{equation*}
\mathcal{A}(P\text{diag}(u)P^{T})=\sum_{i=1}^{l}u_{i}\mathcal{A}(p_{i}p_{i}^{T})=\text{Avec}^{T}[\text{svec}(p_{1}p_{1}^{T}),\cdots,\text{svec}(p_{l}p_{l}^{T})]u=\text{Avec}^{T}(\text{Pvec})u,
\end{equation*}
where \(\text{Avec}=[\text{svec}(A_{1}),\cdots,\text{svec}(A_{m})]\in \mathbb{R}^{\frac{n(n+1)}{2}\times m}\) and \(\text{Pvec}=[\text{svec}(p_{1}p_{1}^{T}),\cdots,\text{svec}(p_{l}p_{l}^{T})]\in \mathbb{R}^{\frac{n(n+1)}{2}\times l}\). In Algorithm \ref{alg:2}, the matrix \(A_{i:j,s:d}\) denotes a submatrix formed by the elements from the \(i\)-th to \(j\)-th rows and from the \(s\)-th to \(d\)-th columns of matrix \(A\); if \(i = j\) or \(s = d\), only \(i\) or \(s\) is displayed respectively.
\begin{algorithm}\label{alg3.1}
\caption{Generate \(\text{Pvec},a,B\)}\label{alg:2}
\begin{algorithmic}[1]
    \Require $P\in \mathbb{R}^{n\times l},\text{Avec},\text{Cvec}=\text{svec}(C)$ 
    \State \(k=0,\text{Pvec}=\mathbf{0}\)
    \For{\(i=1:n\)}
    \State \(\text{Pvec}_{(k+1):(k+i-1),1:l}=\sqrt{2}\text{broadcast}(P_{i,1:l})\odot P_{1:(i-1),1:l}\)
    \State \(\text{Pvec}_{(k+i),1:l}=P_{i,1:l}\odot P_{i,1:l}\)
    \State \(k=k+i\)
    \EndFor
    \State \Return\(a=\text{Pvec}^{T}\text{Cvec},B=\text{Avec}^{T}\text{Pvec}\)
\end{algorithmic}
\end{algorithm}

Through simple calculations, we can determine that the time complexity of Algorithm \ref{alg3.1} is \(\mathcal{O}(mn^{2}l)\). Therefore, limiting the number of newly generated constraints \(l\) each time can significantly improve the practical convergence efficiency of the algorithm.  
\subsection{Termination criterion}\label{subsec:Termination}
In this subsection, we introduce the relative convergence criteria that will be used later. These criteria are commonly used in the algorithms SDPNAL+\cite{yang2015sdpnal+}, SDPT3\cite{toh1999sdpt3}, MOSEK\cite{aps2019mosek}, and SpecBM\cite{ding2023revisiting}, which we will compare in the numerical experiment section. For problems (\ref{eq:Psdp}) and (\ref{eq:Dsdp}), the termination criterion derives from the KKT conditions (\ref{kkt}). Adopting the relative optimality conditions, we define the relative primal infeasibility as  
\begin{equation*}
\delta_{1}=\frac{\|\mathcal{A}(X)-b\|_{2}}{1+\|b\|_{2}}=\frac{\|Bu-b\|_{2}}{1+\|b\|_{2}},
\delta_{2}=\min\{\lambda_{\min}(X),0\}.
\end{equation*}  
For relative dual infeasibility, we have  
\begin{equation*}
\delta_{3}=\frac{\|\mathcal{A}^{T}y-C+S\|_{F}}{1+\|C\|_{F}},
\delta_{4}=\max\{-\lambda_{\min}(S),0\}.
\end{equation*}  
We express the relative dual gap as  
\begin{equation*}
\delta_{5}=\frac{|\langle C,X\rangle-b^{T}y|}{1+|\langle C,X\rangle|+|b^{T}y|}=\frac{|a^{T}u-b^{T}y|}{1+|a^{T}u|+|b^{T}y|}.
\end{equation*}  
By definition, in the iteration process, the primal variables satisfy \(X \succeq 0\) and the slack variables take the form \(S = C - \mathcal{A}^{T}y\), so \(\delta_{2} =\delta_{3} = 0\). In addition, for the bundle method, we can measure the relative approximation accuracy of the lower approximation model to the primal problem as  
\begin{equation*}
\delta_{6}=\frac{|F(y_{k})-F_{k}(z_{k+1})|}{1+|F(y_{k})|}.
\end{equation*}  
Given the precision, we terminate the algorithm under the condition that  
\begin{equation}\label{Termination}
\max\{\delta_{1},\delta_{4},\delta_{5},\delta_{6}\}\leq \epsilon.
\end{equation}
\subsection{Rank prediction}
If the rank \( r^{\star} \) of the optimal solution \( X^{\star} \) remains unknown, we can predict it in the early stages of iterations. By selecting a sufficiently large prior rank \( r > r^{\star} \), during iterations, we calculate the gradient information \( V_{k} \in \mathbb{R}^{n \times r} \), which consists of the eigenvectors corresponding to the smallest \( r \) eigenvalues of \( S_{k} = C - \mathcal{A}^{T}z_{k} \), and add \( V_{k} \) to \( P_{k} \). This ensures that information related to the optimal rank accumulates in the bundle \( P_{k} \). We then analyze the first \( r \) largest singular values of the bundle \( P_{k} \) and the smallest \( r \) eigenvalues of \( S_{k} \) to derive a predicted rank \( r_{k} \) for \( r^{\star} \). When the predicted rank stays unchanged for multiple consecutive iterations, this value can be taken as the rank \( r^{\star} \) of the optimal solution. Specifically, when analyzing the first \( r \) largest singular values of the bundle \( P_{k} \in \mathbb{R}^{n \times l_{k}} \), we first perform its singular value decomposition (SVD), which is expressed as 
\begin{equation*}
P_{k}=L_{k}\Sigma_{k}R_{k},
\end{equation*}  
with \(\Sigma_{k}\in\mathbb{R}^{n\times l_{k}}\). We let \(\sigma_{k}\) denote the diagonal elements of \(\Sigma_{k}\), labeled \(\sigma_{k}^{i}\) (\(i = 1, \cdots, l_{k}\)) and satisfying  
\begin{equation*}
\sigma_{k}^{1}\geq\cdots\geq\sigma_{k}^{r^{\star}}>\sigma_{k}^{r^{\star}+1} \geq\cdots\geq\sigma_{k}^{r}\geq \cdots\geq \sigma_{k}^{l_{k}}\geq0.
\end{equation*}  
The above equation assumes \(\sigma_{k}^{r^{\star}} > \sigma_{k}^{r^{\star}+1}\). We take the difference of the above singular value sequence, considering the first \(r\) results as  
\begin{equation*}
\alpha_{k}^{i}=\sigma_{k}^{i}-\sigma_{k}^{i+1},i=1,\cdots,r.
\end{equation*}  
We assume that when sufficiently close to the optimal solution, a significant gap arises between the first \(r^{\star}\) singular values and the subsequent ones, i.e., \(\alpha_{k}^{r^{\star}} > \alpha_{k}^{i}\) for \(i \neq r^{\star}\), \(i = 1, \cdots, r\). Thus, we can predict the rank of the optimal solution by detecting the position of the largest gap, with \(r_{k} = \bar{r}_{k}\). Similarly, we analyze the distribution of the smallest \(r\) eigenvalues of \(C-\mathcal{A}^{T}z_{k}\) as  
\begin{equation*}
\lambda_{k}^{1}\geq\cdots\geq\lambda_{k}^{r^{\star}}>\lambda_{k}^{r^{\star}+1} \geq\cdots\geq\lambda_{k}^{r}\geq \cdots\geq \lambda_{k}^{l_{k}}\geq0.
\end{equation*}

It is also assumed in the above equation that \(\lambda_{k}^{r^{\star}} > \lambda_{k}^{r^{\star}+1}\). After sufficiently approaching the optimal solution, there is a significant gap between the first \(r^{\star}\) eigenvalues and the subsequent ones, so the rank of the optimal solution can be predicted by detecting this gap, where \(r_{k} = \hat{r}_{k}\). Although the two methods are similar, their combined use may be more effective in some cases, i.e., taking \(r_{k} = \max\{\hat{r}_{k}, \bar{r}_{k}\}\).

After determining the rank of the optimal solution, we can choose to retain only the most recently updated bundle information \(a_{k}^{\star}, B_{k}^{\star}, V_{k}\), and appropriately reduce the step size to improve numerical stability.

In Algorithm \ref{alg:4}, \( P_k \) represents the bundle at the \( k \)-th iteration, and \( D_k \) is a diagonal matrix composed of the smallest \( l_k \) eigenvalues of \(C-\mathcal{A}^T z_k\) at the \( k \)-th iteration. Algorithm \ref{alg:4} is a single-iteration process. In actual invocation, it needs to be called continuously for multiple times until the condition \(\text{predcount}\leq \text{predcountmax}\) is violated, after which the rank \(r^{\star}\) of the predicted optimal solution is output.  
\begin{algorithm}
\caption{Single Iteration Rank Prediction}\label{alg:4}
\begin{algorithmic}[1]
    \Require $\text{priori rank } r > r^{\star},P_{0},D_{0},\text{predcount}=0,\text{predcountmax},r_{0}=0 $
    \If{\(\text{predcount}\leq \text{predcountmax}\)}
        \State \(\sigma_{k}=\text{SVD}(P_{k})\) \text{and} \(\sigma_{k}\in R^{l_{k}}\);
        \State \(\lambda_{k}=\text{diag}(D_{k})\) \text{and} \(\lambda_{k}\in R^{l_{k}}\);
        \State \(\bar{\sigma}_{k}=\sigma_{k}(1:r)-\sigma_{k}(2:r+1)\)
        \State \(\hat{\lambda}_{k}=\lambda_{k}(1:r)-\lambda_{k}(2:r+1)\)
        \State Find the position \( \bar{r}_{k} \) where \( \bar{\sigma}_{k} \) reaches its maximum value.
        \State Find the position \( \hat{r}_{k} \) where \( \hat{\lambda}_{k} \) reaches its maximum value.
        \State \(r_{k}=\max\{\bar{r}_{k},\hat{r}_{k}\}\)
        \If {\(r_{k}=r_{k-1}\)}
            \State \text{predcount}=\text{predcount}+1;
        \Else
            \State \text{predcount}=0;
        \EndIf
    \Else
        \State \(t=0.5t,y_{k}=z_{k}\)
        \State \(\bar{a}_{k}=0,\bar{B}_{k}=0,W_{k}=0\)
        \State \(a_{k}=[\bar{a}_{k};a_{k}^{\star}],B_{k}=[\bar{B}_{k},B_{k}^{\star}],P_{k}=V_{k}\)
        \State \(r^{\star}=r_{k}\)
    \EndIf
    \State \Return \(r^{\star}\)
\end{algorithmic}
\end{algorithm}

The parameter \(\text{predcountmax}\) refers to the number of consecutive times the predicted ranks are the same. A too small predcountmax may lead to a significant increase in the probability of incorrect predicted ranks, while a too large predcountmax may reduce the solution efficiency. Therefore, we set predcountmax=10 here. Employing Algorithm \ref{alg:4} for rank prediction allows the rank of the optimal solution to the problem to be ascertained with negligible loss of computational efficiency, while exhibiting a certain level of robustness. Detailed results of the numerical experiment are presented in Section \ref{subsubsec5.1.2} 
\subsection{Polyhedral bundle algorithm for solving SDP}
After the \(k\)-th iteration, the new candidate point \(z_{k+1}\) satisfies:  
\begin{equation*}
z_{k+1} = \underset{z \in \mathbb{R}^{m}}{\arg\min} \, F_{k}(z) + \frac{1}{2t}\|z - y_{k}\|_{2}^{2}.
\end{equation*}  
It follows that \(F_{k}(z_{k+1}) + \frac{1}{2t}\|z_{k+1} - y_{k}\|_{2}^{2} \leq F_{k}(y_{k}) \leq F(y_{k})\). Thus, when \(z_{k+1} \neq y_{k}\), we define \(\Delta_{k}^{\text{pred}} = F(y_{k}) - F_{k}(z_{k+1}) \geq \frac{1}{2t}\|z_{k+1} - y_{k}\|_{2}^{2} > 0\) as the predicted decrease, and \(\Delta_{k}^{\text{true}} = F(y_{k}) - F(z_{k+1})\) as the true decrease. The ratio of these two decreases is used to decide whether to accept the candidate point \(z_{k+1}\) and adjust the step size \(t\).  

Given parameters \(0 < \beta_{3} < \beta_{1} < \beta_{2} < 1\), if \(\beta_{1}\Delta_{k}^{\text{pred}} \leq \Delta_{k}^{\text{true}}\), the decrease in the objective function at \(z_{k+1}\) is acceptable, so the candidate point is accepted. Further, if \(\beta_{2}\Delta_{k}^{\text{pred}} \leq \Delta_{k}^{\text{true}}\), the decrease is sufficiently large, and the step size can be increased to accelerate convergence. Conversely, if \(\beta_{1}\Delta_{k}^{\text{pred}} > \Delta_{k}^{\text{true}}\), the candidate point is rejected. Meanwhile, if \(\beta_{3}\Delta_{k}^{\text{pred}} \geq \Delta_{k}^{\text{true}}\) occurs consecutively for multiple times, the step size needs to be appropriately reduced. We summarize the content in Section \ref{sec:3} as Algorithm \ref{alg:3}.

\begin{algorithm}
\caption{Polyhedral Bundle Method for solving SDP}\label{alg:3}
\begin{algorithmic}[1]
    \Require $ F(y);\text{ initial step size } t_{0}>0,0<t_{\min}<t_{\max}<\infty;0 < \beta_{3} < \beta_{1} < \beta_{2} < 1;\text{maxiter}> 0;\text{nullcount}=0,\text{nullmax}\geq0,\text{rank } r^{\star}>0,a_{0},B_{0},V_{0},W_{0},\epsilon>0,l_{\max}$
    \For{\(k=1:\text{maxiter}\)}
        \If{\text{Rank prediction is needed}}
            \State \text{Run Algorithm \ref{alg:4} and update} \(r^{\star},l_{\max}\) 
        \EndIf
        \State \text{Solving subproblem (\ref{eq:sub5}) yields \(u_{k}\) and the candidate point \(z_{k+1}\).}
        \State \text{Compute the \(r^{\star}\) top eigenvectors \(V_{k+1}\) of \(\mathcal{A}^{T}z_{k+1}-C\)}.
        \State \text{Compute }\(a_{k+1}^{\star},B_{k+1}^{\star}\) \text{by} Alogrithm \ref{alg:2}
        \State Update \( W_{k+1},P_{k+1},a_{k+1},B_{k+1} \) by (\ref{eq:update1})-(\ref{eq:update5})
        \State \text{Compute} \(\Delta_{k}^{\text{pred}},\Delta_{k}^{\text{true}}\)
        \If{\(\beta_{1}\Delta_{k}^{\text{pred}} \leq \Delta_{k}^{\text{true}}\)}
            \State Set \(y_{k+1}=z_{k+1}\) \text{ and } \text{nullcount}=0
            \If{\(\beta_{2}\Delta_{k}^{\text{pred}} \leq \Delta_{k}^{\text{true}}\)}
                \State \(t=\min\{2t,t_{\max}\}\)
            \EndIf
        \Else
            \State Set \(y_{k+1}=y_{k}\) \text{ and } \text{nullcount}=\text{nullcount}+1
            \If{\(\beta_{3}\Delta_{k}^{\text{pred}}\geq\Delta_{k}^{\text{true}}\text{ and nullcount}\geq\text{nullmax}\)}
                \State \(t=\max\{0.5t,t_{\min}\}\)
                \State \text{nullcount}=0
            \EndIf
        \EndIf
        \If{termination criterion is satisfied}
            \State \Return The $\epsilon$-optimal solution $(X,y,S)$.
            \State Quit.
        \EndIf
    \EndFor
\end{algorithmic}
\end{algorithm}

\section{Theoretical analysis}\label{sec:4} 
In this section, we first prove that under the condition of fixed step size \(t\), the update process of the lower approximation model satisfies the following lemma \ref{lemma:2}. Then, by combining the convergence of existing bundle methods, we present the convergence of the polyhedral bundle method.
\begin{lemma}\label{lemma:2}
The update process of the lower approximation model \(F_k(y)\) via (\ref{eq:update1})–(\ref{eq:update5}) satisfies conditions (\ref{eq:cond1})–(\ref{eq:cond3}).
\end{lemma}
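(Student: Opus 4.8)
The plan is to verify the three defining properties of a valid bundle update---Minorant \eqref{eq:cond1}, Subgradient lower bound \eqref{eq:cond2}, and Model subgradient lower bound \eqref{eq:cond3}---directly for the model $F_{k+1}(y)=F_{\mathcal{W}_{4,k+1}}(y)=-b^Ty+\max_{\tilde W\in\mathcal{W}_{4,k+1}}\langle\tilde W,\mathcal{A}^Ty-C\rangle$, where $\mathcal{W}_{4,k+1}$ is the set $\{\eta W_{k+1}+P_{k+1}\mathrm{diag}(x)P_{k+1}^T\mid \eta\ge0,\,x\ge0,\,\eta+\mathbf 1^Tx\le\rho\}$ built from the updated data $W_{k+1},P_{k+1}$ in \eqref{eq:update1}--\eqref{eq:update3}. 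The key observation throughout is that the update is engineered so that $\mathcal{W}_{4,k+1}\subseteq\mathcal{W}$ (the big set $\{X\succeq0,\ \mathrm{Tr}(X)\le\rho\}$) and so that $\mathcal{W}_{4,k+1}$ still contains the particular atoms needed to dominate the two linearizations.

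First, for the Minorant property, I would check that $W_{k+1}$ as defined in \eqref{eq:update1} is a convex combination of $W_k$ and the rank-one terms $\bar P_k\mathrm{diag}(\bar x_k)\bar P_k^T$ normalized by $\eta_k+\mathbf 1^T\bar x_k$, hence $W_{k+1}\succeq0$ with $\mathrm{Tr}(W_{k+1})=1$ (using $\mathrm{Tr}(W_k)=1$ and $\|v_{k,i}\|=1$), and that the columns of $P_{k+1}=[\hat P_k,V_{k+1}]$ are still (nearly) unit-norm eigenvectors, so every element of $\mathcal{W}_{4,k+1}$ is PSD with trace at most $\rho$; therefore $\mathcal{W}_{4,k+1}\subseteq\mathcal{W}$ and taking the max over a subset gives $F_{k+1}(y)\le F(y)$ for all $y$, which is exactly \eqref{eq:cond1}. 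Second, for \eqref{eq:cond2}, I would exhibit an explicit member of $\mathcal{W}_{4,k+1}$ realizing the subgradient $g_{k+1}\in\partial F(z_{k+1})$: by the subdifferential formula for $F$, $g_{k+1}=-b+\rho\mathcal{A}(vv^T)$ (or a convex combination / scaled version) where $v$ is a bottom eigenvector of $S_{k+1}=C-\mathcal{A}^Tz_{k+1}$, and $v$ lies in the span of the columns of $V_{k+1}$, hence $\rho\,vv^T=P_{k+1}\mathrm{diag}(x)P_{k+1}^T$ for an appropriate $x\ge0$ with $\mathbf 1^Tx=\rho$; then for that fixed atom $\hat W:=\rho vv^T\in\mathcal{W}_{4,k+1}$ one has $F_{k+1}(y)\ge -b^Ty+\langle\hat W,\mathcal{A}^Ty-C\rangle$, which after rearranging and using $\langle\hat W,\mathcal{A}^Tz_{k+1}-C\rangle = \rho\lambda_{\min}(S_{k+1})\cdot(\pm1)$-type identities is precisely the linearization $F(z_{k+1})+\langle g_{k+1},y-z_{k+1}\rangle$. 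Third, for \eqref{eq:cond3}, the aggregation step is the point: the retained part $\hat P_k$ together with the aggregated matrix $W_{k+1}$ is constructed so that the optimal atom $\tilde W_k=\eta_kW_k+P_k\mathrm{diag}(x_k)P_k^T$ of the $k$-th subproblem still belongs to $\mathcal{W}_{4,k+1}$---indeed $\tilde W_k=(\eta_k+\mathbf 1^T\bar x_k)W_{k+1}+\hat P_k\mathrm{diag}(\hat x_k)\hat P_k^T$ with $(\eta_k+\mathbf 1^T\bar x_k)+\mathbf 1^T\hat x_k=\eta_k+\mathbf 1^Tx_k\le\rho$---so $F_{k+1}$ dominates the affine function $y\mapsto -b^Ty+\langle\tilde W_k,\mathcal{A}^Ty-C\rangle$, which is exactly the model linearization $F_k(z_{k+1})+\langle s_{k+1},y-z_{k+1}\rangle$ with $s_{k+1}=\tfrac1t(y_k-z_{k+1})=-b+\mathcal{A}(\tilde W_k)\in\partial F_k(z_{k+1})$, using the optimality condition $z_{k+1}=y_k+t(b-\mathcal{A}(\tilde W_k))$ derived for \eqref{eq:sub3}.

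I expect the main obstacle to be the bookkeeping around the regularization term $\xi I$ in the modified subproblem \eqref{eq:sub5}: strictly speaking the KKT/optimality identity $z_{k+1}=y_k+t_k(b-\mathcal{A}(\tilde W_k))$ and the identification $s_{k+1}=-b+\mathcal{A}(\tilde W_k)$ hold exactly only when $\xi=0$, so I would either state Lemma~\ref{lemma:2} for the idealized update (as the lemma's hypothesis ``fixed step size'' already suggests a clean setting) or carry an $O(\xi)$ perturbation through conditions \eqref{eq:cond2}--\eqref{eq:cond3} and argue the perturbed model still satisfies the relaxed inequalities needed for convergence. A secondary technical point is handling the degenerate cases where $\eta_k+\mathbf 1^T\bar x_k=0$ (so $W_{k+1}=0$ and one must not divide by zero in \eqref{eq:update1}--\eqref{eq:update2}) and where $\lambda_{\min}(S_{k+1})\ge0$ (so $\partial F(z_{k+1})=\{-b\}$ and the atom in \eqref{eq:cond2} is simply $\hat W=0\in\mathcal{W}_{4,k+1}$); these are routine once separated out. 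Everything else reduces to the convexity/containment arithmetic sketched above, so after establishing Lemma~\ref{lemma:2} the convergence of the polyhedral bundle method follows by invoking the general bundle convergence result quoted after Algorithm~\ref{alg:1}.
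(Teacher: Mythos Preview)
Your proposal is correct and follows essentially the same route as the paper's proof: verifying \eqref{eq:cond1} via the containment $\mathcal{W}_{4,k+1}\subseteq\mathcal{W}$, verifying \eqref{eq:cond2} by exhibiting $\rho vv^{T}\in\mathcal{W}_{4,k+1}$ for the bottom eigenvector $v$ of $S_{k+1}$ (with the separate case $\lambda_{\min}(S_{k+1})\ge 0$ handled by the zero atom), and verifying \eqref{eq:cond3} by the aggregation identity $\tilde W_k=(\eta_k+\mathbf 1^{T}\bar x_k)W_{k+1}+\hat P_k\mathrm{diag}(\hat x_k)\hat P_k^{T}\in\mathcal{W}_{4,k+1}$. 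One small tightening: for \eqref{eq:cond2} you should use that $v$ is actually a \emph{column} of $V_{k+1}$ (not merely in its span), so that $\rho vv^{T}=P_{k+1}\mathrm{diag}(\rho s)P_{k+1}^{T}$ for a standard basis vector $s$; a generic vector in the span would produce cross terms not expressible as $P_{k+1}\mathrm{diag}(x)P_{k+1}^{T}$.
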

\begin{proof}
First, we have:
\begin{equation*}
\begin{aligned}
F_{k+1}(y)&=-b^{T}y+\underset{\eta \geq 0, x \geq 0, \eta + \mathbf{1}^{T}x \leq \rho}{\max}\langle \eta W_{k+1} + P_{k+1}\text{diag}(x)P_{k+1}^{T}, \mathcal{A}^{T}y-C\rangle\\
&\leq-b^{T}y+\underset{X\in \mathbb{S}_{+}^{n}}{\max}\langle X,\mathcal{A}^{T}y-C\rangle=F(y).
\end{aligned}
\end{equation*}
Therefore, condition (\ref{eq:cond1}) is evident. In the above formula, it is only required that the column vectors of \(P_{k+1}\) are unit-normalized, and orthogonality is not necessary. 

Second, suppose that \(v\) is the eigenvector corresponding to the smallest eigenvalue of \(C-\mathcal{A}^{T}z_{k+1}\). According to the construction of \(P_{k+1}\), we know there exists a basis vector 
\begin{equation*}
s = (0,\cdots,0,1,0,\cdots,0)^{T} \text{ such that } P_{k+1}s = v.
\end{equation*}

Thus, \(\rho vv^{T} = \rho P_{k+1}ss^{T}P_{k+1}^{T} = P_{k+1}\text{diag}(\rho s)P_{k+1}^{T}\), and \(\mathbf{1}^{T}\rho s = \rho\mathbf{1}^{T}s = \rho\). Therefore, when \(\eta = 0\), we have \(\rho vv^{T} \in \{\eta W_{k+1} + P_{k+1}\text{diag}(x)P_{k+1}^{T} \mid \eta \geq 0, x \geq 0, \eta + \mathbf{1}^{T}x \leq \rho\}\). Thus, when \(\lambda_{\max}(\mathcal{A}^{T}z_{k+1} - C) > 0\), we have:
\begin{equation*}
\begin{aligned}
F_{k+1}(y)&=-b^{T}y+\underset{\eta \geq 0, x \geq 0, \eta + \mathbf{1}^{T}x \leq \rho}{\max}\langle \eta W_{k+1} + P_{k+1}\text{diag}(x)P_{k+1}^{T}, \mathcal{A}^{T}y-C\rangle\\ 
&\geq -b^{T}y+\langle \rho vv^{T}, \mathcal{A}^{T}y-C\rangle\\ 
&=-b^{T}z_{k+1}+\langle \rho vv^{T}, \mathcal{A}^{T}z_{k+1}-C\rangle+\langle -b+\rho \mathcal{A}(vv^{T}),y-z_{k+1}\rangle\\ 
&=F(z_{k+1})+\langle g_{k+1},y-z_{k+1}\rangle,
\end{aligned}
\end{equation*}
where \(g_{k+1}=-b+\rho \mathcal{A}(vv^{T})\in \partial F(z_{k+1})\). On the other hand, if \(\lambda_{\max}(\mathcal{A}^{T}z_{k+1} - C) \leq 0\), since \(0 \in W_{k+1}\), it also follows that
\begin{equation*}
F_{\mathcal{W}_{4,k+1}}(y)=-b^{T}y=-b^{T}z_{k+1}+\langle -b,y-z_{k+1}\rangle =F(z_{k+1})+\langle g_{k+1},y-z_{k+1}\rangle,
\end{equation*}
where \(g_{k+1} = -b \in \partial F(z_{k+1})\). Therefore, condition (\ref{eq:cond2}) holds.

Finally, we prove that condition (\ref{eq:cond3}) is satisfied. Assume that \(u_{k}^{\star}=[\eta_{k}^{\star};x_{k}^{\star}]\) is the optimal solution to Problem (\ref{eq:sub4}), then
\begin{equation*}
\begin{aligned}
W_{k}^{\star} &= \eta_{k}^{\star}W_{k} + P_{k}\text{diag}(x_{k}^{\star})P_{k}^{T} \\ 
&= \eta_{k}^{\star}W_{k} + P_{k}(:,\bar{p}_{k})\text{diag}(x_{k}^{\star}(\bar{p}_{k}))P_{k}^{T}(:,\bar{p}_{k}) + P_{k}(:,\hat{p}_{k})\text{diag}(x_{k}^{\star}(\hat{p}_{k}))P_{k}^{T}(:,\hat{p}_{k}) \\
&= \eta_{k}^{\star}W_{k} + \bar{P}_{k}\text{diag}(x_{k}^{\star}(\bar{p}_{k}))\bar{P}^{T} + [\hat{P}_{k},V_{k+1}]\text{diag}([x_{k}^{\star}(\hat{p}_{k});\textbf{0}])[\hat{P}_{k},V_{k+1}]^{T} \\ 
&= (\eta_{k}^{\star} + \textbf{1}^{T}x_{k}^{\star}(\bar{p}_{k}))\frac{\eta_{k}^{\star}W_{k} + \bar{P}_{k}\text{diag}(x_{k}^{\star}(\bar{p}_{k}))\bar{P}^{T}}{\eta_{k}^{\star} + \textbf{1}^{T}x_{k}^{\star}(\bar{p}_{k})} + P_{k+1}\text{diag}([x_{k}^{\star}(\hat{p}_{k});\textbf{0}])P_{k+1}^{T} \\ 
&= (\eta_{k}^{\star} + \textbf{1}^{T}x_{k}^{\star}(\bar{p}_{k}))W_{k+1} + P_{k+1}\text{diag}([x_{k}^{\star}(\hat{p}_{k});\textbf{0}])P_{k+1}^{T},
\end{aligned}
\end{equation*}
where \(\text{Tr}(W_{k+1})=\text{Tr}(\frac{\eta_{k}^{\star}W_{k} + \bar{P}_{k}\text{diag}(x_{k}^{\star}(\bar{p}_{k}))\bar{P}^{T}}{\eta_{k}^{\star} + \textbf{1}^{T}x_{k}^{\star}(\bar{p}_{k})}) \leq 1\), \(\eta_{k}^{\star} + \textbf{1}^{T}x_{k}^{\star}(\bar{p}_{k}) \geq 0\), \([x_{k}^{\star}(\hat{p}_{k});\textbf{0}] \geq 0\), and \(\eta_{k}^{\star} + \textbf{1}^{T}x_{k}^{\star}(\bar{p}_{k}) + \textbf{1}^{T}[x_{k}^{\star}(\hat{p}_{k});\textbf{0}] = \eta_{k}^{\star} + \textbf{1}^{T}x_{k}^{\star} \leq \rho\). Therefore,
\begin{equation*}
W_{k}^{\star} = (\eta_{k}^{\star} + \textbf{1}^{T}x_{k}^{\star}(\bar{p}_{k}))W_{k+1} + P_{k+1}\text{diag}([x_{k}^{\star}(\hat{p}_{k});\textbf{0}])P_{k+1}^{T} \in \mathcal{W}_{4,k+1}.
\end{equation*}
Therefore 
\begin{equation*}
\begin{aligned}
F_{k+1}(y)&=-b^{T}y+\underset{W\in \mathcal{W}_{4,k+1}}{\max}\langle W, \mathcal{A}^{T}y-C\rangle\\ 
&\geq -b^{T}y+\langle W_{k}^{\star}, \mathcal{A}^{T}y-C\rangle\\
&=-b^{T}z_{k+1}+\langle W_{k}^{\star}, \mathcal{A}^{T}z_{k+1}-C\rangle+\langle -b+\mathcal{A}W_{k}^{\star}, y-z_{k+1}\rangle\\
&=F_{k}(z_{k+1})+\langle s_{k+1}, y-z_{k+1}\rangle,
\end{aligned}
\end{equation*}
where \(s_{k+1}=-b+\mathcal{A}W_{k}^{\star}=\frac{y_{k}-z_{k+1}}{t_{k}}\in \partial F_{k}(z_{k+1})\).
\hfill\fbox{\rule{0pt}{0.42mm}\rule{0.42mm}{0pt}}
\end{proof}

The convergence of Algorithm \ref{alg:1} has been discussed in \cite{diaz2023optimal,monteiro2024parameter}, i.e., the following Theorem \ref{theorem:2}.
\begin{theorem}\label{theorem:2}
Consider a convex and \(M\)-Lipschitz function \(F(y)\) in
(\ref{eq:F}). Let \(F^{\star}=\underset{y\in R^{m}}{\inf} F(y) \) and \(\mathcal{C} = \{y \in R^{m} | F(y) = F^{\star}\}\). If \(\mathcal{C}\) is nonempty, the number of steps for Algorithm \ref{alg:1} before reaching an \(\epsilon> 0\) optimality, i.e. \(F(y)-F^{\star} \leq \epsilon\), is bounded by
\begin{equation*}
k\leq \mathcal{O}(\frac{12M^{2}D^{4}}{\beta(1-\beta)t\epsilon^{3}}),
\end{equation*}
where \(D = \underset{k}{\sup} \text{dist}(y_{k},\mathcal{C})<\infty\).
\end{theorem}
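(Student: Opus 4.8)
The plan is to obtain Theorem~\ref{theorem:2} as a direct application of the general complexity theory for proximal bundle methods developed in \cite{diaz2023optimal,monteiro2024parameter}: those arguments use only that the subproblem (\ref{eq:bundle}) is solved exactly and that the successive models obey (\ref{eq:cond1})--(\ref{eq:cond3}), both of which hold for Algorithm~\ref{alg:1} by design (and are verified for the polyhedral instance in Lemma~\ref{lemma:2}). I would therefore reproduce their proof, which separates the iterations into descent steps and null steps and bounds each population.

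\textbf{Descent steps.} On a descent step $y_{k+1}=z_{k+1}$ and, by (\ref{eq:decrease}), $F(y_k)-F(y_{k+1})=\Delta_k^{\mathrm{true}}\ge\beta\Delta_k^{\mathrm{pred}}$; on a null step $F(y_{k+1})=F(y_k)$. Hence $\{F(y_k)\}$ is nonincreasing and, writing $\mathcal{D}$ for the set of descent indices, $\beta\sum_{k\in\mathcal{D}}\Delta_k^{\mathrm{pred}}\le F(y_1)-F^\star\le M\,\mathrm{dist}(y_1,\mathcal{C})\le MD$, using $M$-Lipschitz continuity and a projection of $y_1$ onto $\mathcal{C}$. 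I would also record the crude lower bound on the predicted decrease: optimality of $z_{k+1}$ in (\ref{eq:bundle}) together with (\ref{eq:cond1}) gives $\hat F_k(z_{k+1}):=F_k(z_{k+1})+\tfrac1{2t}\|z_{k+1}-y_k\|^2\le F^\star+\tfrac1{2t}\,\mathrm{dist}(y_k,\mathcal{C})^2\le F^\star+\tfrac{D^2}{2t}$, so that $\Delta_k^{\mathrm{pred}}=F(y_k)-F_k(z_{k+1})\ge(F(y_k)-F^\star)+\tfrac1{2t}\|z_{k+1}-y_k\|^2-\tfrac{D^2}{2t}$. This shows a descent step taken with an accurate model makes definite progress toward optimality, but it does not by itself bound how long the model takes to become accurate; that is exactly the role of the null-step count.

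\textbf{Null steps (the core estimate).} Fix a maximal run of consecutive null steps sharing the frozen center $\bar y=y_k$. Failure of (\ref{eq:decrease}) gives $F(z_{j+1})-F_j(z_{j+1})>(1-\beta)\Delta_j^{\mathrm{pred}}$, i.e.\ the model underestimates $F$ at the trial point by a definite margin. Conditions (\ref{eq:cond2})--(\ref{eq:cond3}) say that the next model $F_{j+1}$ dominates the pointwise maximum of the cut $y\mapsto F(z_{j+1})+\langle g_{j+1},y-z_{j+1}\rangle$ and the aggregate linearization $y\mapsto F_j(z_{j+1})+\langle s_{j+1},y-z_{j+1}\rangle$ with $s_{j+1}=\tfrac1t(\bar y-z_{j+1})$. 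Lower-bounding $\hat F_{j+1}(z_{j+2})$ by minimizing the maximum of those two affine pieces plus $\tfrac1{2t}\|\cdot-\bar y\|^2$ — a one-dimensional quadratic optimization in the mixing weight, solved explicitly via minimax — and estimating the slopes through $\|g_{j+1}\|\le M$ and $\tfrac1{2t}\|z_{j+1}-\bar y\|^2\le\Delta_j^{\mathrm{pred}}$ shows that the proximal optimal value $\hat F_j(z_{j+1})$ strictly increases, by an amount that in the governing regime is quadratic in $(1-\beta)\Delta_j^{\mathrm{pred}}$ and inversely proportional to $M^2t+D^2$. Since $\hat F_j(z_{j+1})$ is monotone along the run and trapped in $[\,F(\bar y)-\Delta_j^{\mathrm{pred}},\,F^\star+\tfrac{D^2}{2t}\,]$, the run is finite, with length controlled by the extent to which the terminating descent step still falls short of $\epsilon$-optimality. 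Pairing this with the proximal-distance recursion for descent steps — $\|y_{k+1}-y^\star\|^2\le\|y_k-y^\star\|^2-2t\bigl(F_k(z_{k+1})-F^\star\bigr)$ for any $y^\star\in\mathcal{C}$, which follows from $z_{k+1}=y_k-ts_{k+1}$ and the subgradient inequality for $F_k$ — and summing over the whole trajectory yields, after the routine optimization of the free constants, the claimed bound $\mathcal{O}\bigl(M^2D^4/(\beta(1-\beta)t\epsilon^3)\bigr)$; the cube of $\epsilon^{-1}$ is precisely the signature of the quadratic-in-$\Delta^{\mathrm{pred}}$ model improvement, and the numerical factor $12$ is an artifact of that bookkeeping.

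\textbf{Main obstacle.} The delicate step is the null-step improvement lemma: one must solve the auxiliary program $\min_y[\max\{\ell_1(y),\ell_2(y)\}+\tfrac1{2t}\|y-\bar y\|^2]$ exactly and then identify which of two regimes — a linear one, relevant when $\Delta_j^{\mathrm{pred}}$ is small, and a quadratic one, relevant when it is large — governs the increment, since it is the quadratic regime that degrades the complexity from $\epsilon^{-2}$ to $\epsilon^{-3}$; a secondary subtlety is merging the null-step and descent-step accounting into a single amortized estimate. A minor additional point is the standing hypothesis $D=\sup_k\mathrm{dist}(y_k,\mathcal{C})<\infty$, which is assumed here but would otherwise follow from monotonicity of $F(y_k)$ together with boundedness of the sublevel set $\{y:F(y)\le F(y_1)\}$.
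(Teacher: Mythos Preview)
Your proposal is correct and aligned with the paper: the paper does not supply its own proof of Theorem~\ref{theorem:2} but simply quotes it from \cite{diaz2023optimal,monteiro2024parameter}, exactly the sources whose descent-step/null-step argument you outline. Your sketch goes further than the paper itself by reproducing the mechanism behind the bound, but there is nothing to compare against beyond the citation.
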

\begin{lemma}\label{lemma4.2}
The penalty function (\ref{eq:penalty}) is \((\|b\|_{2}+\rho \mathcal{A}_{\text{op}})\)-Lipschitz continuous.
\end{lemma}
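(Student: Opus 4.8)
The plan is to split $F(y)=-b^{T}y+\rho\max\{\lambda_{\max}(\mathcal{A}^{T}y-C),0\}$ into its affine part $\ell(y)=-b^{T}y$ and its nonsmooth part $g(y)=\rho\max\{\lambda_{\max}(\mathcal{A}^{T}y-C),0\}$, establish a Lipschitz bound for each, and combine them through the elementary fact that the Lipschitz constant of a sum is at most the sum of the constants. For the affine part, the Cauchy--Schwarz inequality gives $|\ell(x)-\ell(y)|=|b^{T}(x-y)|\leq\|b\|_{2}\,\|x-y\|_{2}$, so $\ell$ is $\|b\|_{2}$-Lipschitz.

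For $g$, I would write it as the composition of three maps: the affine map $L_{1}\colon y\mapsto\mathcal{A}^{T}y-C$ from $\mathbb{R}^{m}$ to $\mathbb{S}^{n}$, the map $L_{2}=\lambda_{\max}\colon\mathbb{S}^{n}\to\mathbb{R}$, and the scalar map $L_{3}\colon t\mapsto\max\{t,0\}$, so that $g=\rho\,(L_{3}\circ L_{2}\circ L_{1})$. Each factor is Lipschitz: $L_{1}$ has constant $\mathcal{A}_{\text{op}}$ since $\|\mathcal{A}^{T}(x-y)\|_{\text{op}}\leq\mathcal{A}_{\text{op}}\|x-y\|_{2}$ by definition of the operator norm of $\mathcal{A}^{T}$; $L_{3}$ has constant $1$ because $|\max\{s,0\}-\max\{t,0\}|\leq|s-t|$; and $L_{2}$ has constant $1$, which follows from the Rayleigh-quotient characterization $\lambda_{\max}(A)=\max_{\|v\|_{2}=1}v^{T}Av$: evaluating the maximizer $v^{\star}$ for $A$ at $B$ gives $\lambda_{\max}(A)-\lambda_{\max}(B)\leq (v^{\star})^{T}(A-B)v^{\star}\leq\|A-B\|_{\text{op}}$, and swapping $A$ and $B$ yields $|\lambda_{\max}(A)-\lambda_{\max}(B)|\leq\|A-B\|_{\text{op}}$. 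Since the Lipschitz constant of a composition is the product of the Lipschitz constants of its factors, $g$ is $\rho\cdot 1\cdot 1\cdot\mathcal{A}_{\text{op}}=\rho\,\mathcal{A}_{\text{op}}$-Lipschitz.

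Combining the two bounds, $|F(x)-F(y)|\leq|\ell(x)-\ell(y)|+|g(x)-g(y)|\leq(\|b\|_{2}+\rho\,\mathcal{A}_{\text{op}})\|x-y\|_{2}$ for all $x,y\in\mathbb{R}^{m}$, which is the assertion. There is no substantial obstacle here; the only point requiring care is consistency of the norm on $\mathbb{S}^{n}$ at the interface between $L_{1}$ and $L_{2}$ --- one must measure $\mathcal{A}^{T}(x-y)$ in the same (spectral/operator) norm in which $\lambda_{\max}$ is $1$-Lipschitz, and if $\mathcal{A}_{\text{op}}$ is instead defined through a Frobenius-type norm one simply invokes $\|\cdot\|_{\text{op}}\leq\|\cdot\|_{F}$ to absorb the discrepancy into the stated constant.
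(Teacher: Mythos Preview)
Your proposal is correct and follows essentially the same argument as the paper's own proof: both split $F$ into the linear part $-b^{T}y$ and the penalty part, bound the former by Cauchy--Schwarz, and handle the latter via the chain $\max\{\cdot,0\}$ is $1$-Lipschitz, $\lambda_{\max}$ is $1$-Lipschitz with respect to $\|\cdot\|_{\text{op}}$, and $\|\mathcal{A}^{T}(x-y)\|_{\text{op}}\leq\mathcal{A}_{\text{op}}\|x-y\|_{2}$. Your presentation as a composition of Lipschitz maps is slightly more structured, but the inequalities invoked are identical to the paper's line-by-line derivation.
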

\begin{proof}
    For any \(x, y \in \mathbb{R}^m\):
    \begin{equation*}
    \begin{aligned}
         \|F(x)-F(y)\|_{2}&=\|-b^{T}(x-y)+\rho(\max\{\lambda_{\max}(\mathcal{A}^{T}x-C),0\}-\max\{\lambda_{\max}(\mathcal{A}^{T}y-C),0\})\|_{2}\\
         &\leq \|-b^{T}(x-y)\|_{2}+\rho\|\max\{\lambda_{\max}(\mathcal{A}^{T}x-C),0\}-\max\{\lambda_{\max}(\mathcal{A}^{T}y-C),0\}\|_{2}\\
         &\leq \|b\|_{2}\|x-y\|_{2}+\rho\|\lambda_{\max}(\mathcal{A}^{T}x-C)-\lambda_{\max}(\mathcal{A}^{T}y-C)\|_{2}\\
         &\leq \|b\|_{2}\|x-y\|_{2}+\rho\|\mathcal{A}^{T}(x-y)\|_{\text{op}}\\
         &\leq \|b\|_{2}\|x-y\|_{2}+\rho\mathcal{A}_{\text{op}}\|x-y\|_{2}\\
         &=(\|b\|_{2}+\rho \mathcal{A}_{\text{op}})\|x-y\|_{2}.
    \end{aligned}
    \end{equation*}
    In the above derivation, \(\mathcal{A}_{\text{op}}=\mathcal{A}^{T}_{\text{op}}\) is the operator norm of \(\mathcal{A}^{T}\), i.e., \(\mathcal{A}^{T}_{\text{op}} = \max_{\|y\| = 1} \|\mathcal{A}^{T}y\|\). Therefore, the penalty problem (\ref{eq:penalty}) is \((\|b\|_2 + \rho \mathcal{A}_{\text{op}})\)-Lipschitz continuous.
    \hfill\fbox{\rule{0pt}{0.42mm}\rule{0.42mm}{0pt}}
\end{proof}

For our Algorithm \ref{alg:3}, assuming a fixed step size \(t\), with the rank of the optimal solution \(r^{\star}\) being known and the conditions of the aforementioned Lemma \ref{lemma:2} being satisfied, the above Theorem \ref{theorem:2} can be directly applied. The coefficients in \(\mathcal{O}\) depend only on the Lipschitz constant of Problem \ref{eq:penalty} (see Lemma \(\ref{lemma4.2}\)), the supremum \(D\) of the distances from the iteration sequence to the optimal solution set \(\mathcal{C}\), the parameter \(\beta_{1}\), and the lower bound of the step size \(t_{\min}\), therefore, the upper bound on the number of iterations required for Algorithm \(\ref{alg:3}\) to achieve \(\epsilon\)-optimality satisfies
\begin{equation*}
k\leq \mathcal{O}\left(\frac{1}{\epsilon^{3}}\right).
\end{equation*}
\section{Numerical experiment}\label{sec:5}
In this section, we test and compare our algorithm (denoted as OURS) with several benchmark solvers on random SDP problems and MaxCut problems, including MOSEK\cite{aps2019mosek}, SDPT3\cite{toh1999sdpt3}, SDPNAL+\cite{yang2015sdpnal+}, and \((r_{p}, r_{c})\)-SBMD\cite{liao2023overview}. Among them, MOSEK and SDPT3 are two SDP solvers based on second-order interior-point methods; SDPNAL+ is a SDP solver based on the semismooth Newton-conjugate gradient method; \((r_{p}, r_{c})\)-SBMD is an implementation of the spectral bundle method for the dual SDP (\ref{eq:Dsdp}). Given an initial rank \(r\), which is generally set as the rank of the optimal solution, it satisfies \(r = r_p + r_c\) during iterations. This means that after each iteration, the \(r_c\) bundles with the least information content are aggregated, leaving \(r_p\) bundles with the most information content. Then, the eigenvectors corresponding to the smallest \(r_c\) eigenvalues of the new dual variable are computed, i.e., \(r_c\) new bundles are added, so that the number of bundles in the next iteration remains \(r\).

In the above algorithms, relative optimality conditions are used to terminate the algorithms. For MOSEK, SDPT3, and SDPNAL+, four optimality conditions are adopted to terminate the algorithms: relative primal infeasibility, relative dual infeasibility, relative dual gap (it should be noted that the termination criterion of SDPNAL+ does not include the relative dual gap), and relative complementary slackness. For OURS, we will use (\ref{Termination}) in Section \ref{subsec:Termination} as the optimality criterion, and the stopping criterion of \((r_p, r_c)\)-SBMD is the same as that of OURS.

All the following numerical experiments were run on a laptop equipped with a 14-core Intel i9-13900H CPU @2.60GHz and 32GB RAM, using the MATLAB R2023a computing platform. The MATLAB code of OURS is available on \url{https://github.com/guran1214/Polyhedral-Bundle-Method-for-semidefinite-programming}. The letter \(\textbf{T}\) denotes that the given precision was not achieved within the specified solution time or number of iterations, while \(\textbf{OOM}\) indicates that the algorithm encountered an out-of-memory situation during iterations. In the following experiments, we set the precision to a moderate level of \(\epsilon=10^{-4}\), and we set the upper limit of the solution time to 1800 seconds. For the number of iterations and other parameters, Mosek, SDPT3, and SDPNAL+ all use their default settings; for OURS and \((r_p, r_c)\)-SBMD, we set the number of iterations to 500.

For OURS, we set the lower bound of the step size as \(t_{\min}=10^{-3}\) and the upper bound of the step size as \(t_{\max}=1\). The rank used in iterations is the rank of the given optimal solution, i.e., \(r=r^{\star}\), and the penalty parameter is \(\rho=2\text{Tr}(X^{\star})+1\). For the algorithm \((r_{p}, r_{c})\)-SBMD, we retain the basic parameter settings and set \(r_{p}=0\) and \(r_{c}=r^{\star}\). In subsequent experiments, we refer to \((r_p, r_c)\)-SBMD as SpecBM.

\subsection{Random SDP}
\subsubsection{Data generation}
Here, we make slight modifications to the random SDP problem generation algorithm in \cite{liao2023overview} and present the algorithm for generating random sparse standard SDP problems:
\begin{algorithm}
\caption{Algorithm for generating random sparse SDP problems with a primal variable of rank \( r \) that satisfies the strict complementarity slackness condition}\label{alg:5}
\begin{algorithmic}[1]
    \Require $n,m,r,\bar{\rho},s$ 
    \State \text{Randomly generate a sparse} \(X=s\bar{X}\) \text{such that} \(X\in \mathbb{S}^{n}_{++}\) \text{with sparsity } \( \bar{\rho}\)
    \State \text{Compute eigen-decomposition on} \(X\) as \(X = U_{1}\Sigma_{1}U_{1}^{T} + U_{2}\Sigma_{2}U_{2}^{T},U_{1}\in \mathbb{R}^{n\times r},U_{2}\in \mathbb{R}^{n\times (n-r)}\)
    \State\text{Randomly generate the sparse constraint matrices} \(A_{1},A_{2},A_{3}\cdots,A_{m} \in \mathbb{S}
^{n}\) \text{with sparsity } \( \bar{\rho}\), \text{where }\(\text{tr}(A_{i}) = 0,A_{i}=s\bar{A}_{i},i = 1, . . . , m\)
    \State \text{Randomly generate a dual solution} \(y^{\star}\in \mathbb{R}^{m}\)
    \State \text{Set} \(X^{\star} = U_{1}\Sigma_{1}U_{1}^{T},S^{\star}=U_{2}\Sigma_{2}U_{2}^{T},C=S^{\star}+\mathcal{A}^{T}y^{\star},b=\mathcal{A}(X^{\star})\)
    \State \Return \text{problem data} \((C,A_{1}, . . . , A_{m},b)\) \and a pair of optimal solutions \(X^{\star}, y^{\star}, S^{\star}\)
\end{algorithmic}
\end{algorithm}

In the random SDP generated by Algorithm \ref{alg:5}, all elements in \(\bar{X}, \bar{A}_{1}, \ldots, \bar{A}_{m}\) are drawn from the standard normal distribution \(N(0,1)\), with a sparsity of \(\bar{\rho}\), which denotes the proportion of non-zero elements in the matrices. The parameter \(s\) is used to adjust the magnitude of the condition number \(\bar{\kappa}(S^{\star})\) of the dual optimal solution \(S^{\star}\). In the following experiments, we use \(s = 1, 50\text{ and }500\) to generate SDP problems with low, medium, and high condition numbers respectively.
\subsubsection{Rank Prediction Test}\label{subsubsec5.1.2}
In this subsubsection, we introduce the results of the numerical experiments on the rank prediction algorithm. We select different priori ranks \( r \) and test the performance of rank prediction on different random SDP problems, where \( r_{0}=r^{\star} \) indicates that the rank of the optimal solution is known, so no prediction is performed. The theoretical studies by Barvinok\cite{barvinok1995problems} and Pataki \cite{burer2003nonlinear} indicate that the primal SDP (\ref{eq:Psdp}) always has a solution with rank \( r \), where \( \frac{r(r + 1)}{2} \leq m \); thus, we set \( r_{5}=\lceil \sqrt{2m} \rceil + 1 \).
\begin{table}
    \centering
    \caption{Rank prediction on random low-rank and low condition number SDP problems}\label{t2}
    \setlength\tabcolsep{0.9pt}
    \begin{tabular*}{\textwidth}{@{\extracolsep\fill}ccccccccc}
\toprule 
~&~&~& \multicolumn{6}{c}{CPU Time(s)(iterations)}  \\ 
\cmidrule{4-9}
n&m&\(r^{\star}\)&\(r_0=r^{\star}\)&\(r_1=r^{\star}+5\)&\(r_2=r^{\star}+10\)&\(r_3=r^{\star}+15\)&\(r_4=r^{\star}+20\)&\( r_5 = \lceil \sqrt{2m} \rceil + 1 \)\\
\midrule    
300&300&5&\textbf{0.57}(56) &0.78(68) &\textbf{0.57}(49)&0.99(81)&0.73(56)&0.71(61) \\
500&500&10&\textbf{1.14}(46) &1.45(53) &1.50(53) &1.52(52) &1.61(52)&\textbf{1.14}(41) \\
800&800&10&4.77(80) &\textbf{3.37}(57) &3.58(54) &3.59(54) &3.97(56)&3.74(52) \\
1000&1000&15&\textbf{10.61}(106) &17.00(153) &17.24(150) &36.06(328) &16.70(145)&10.65(103) \\
1500&1500&15&11.58(48) &19.27(76) &21.97(79) &\textbf{11.40}(40) &25.29(92)&29.72(106) \\
1000&10000&20&\textbf{14.79}(93) &19.32(108) &20.48(112) &17.72(100) &23.61(130)&25.62(115) \\
2000&2000&10&\textbf{18.53}(49) &25.48(64) &23.11(55) &21.92(48) &28.64(62)&38.99(69) \\
\hline
\end{tabular*}
\end{table}

For each test case in Table \ref{t2} that requires predicting the rank of the optimal solution (e.g., \( n=300 \), \( m=300 \), \( r^{\star}=5 \), \( r_1=10 \)), its solution process can be divided into two phases: In the first phase, the rank used is \( r_1=10 \), meaning 10 bundles are added in each iteration, and the upper limit of the bundle size at this stage is 65. After the rank of the optimal solution is predicted, the process moves to the second phase, where the predicted rank of the optimal solution (assuming the prediction is correct), \( r^{\star}=5 \), is adopted, with the bundle upper limit set to 20. When switching phases, excess bundles need to be aggregated to avoid exceeding the upper limit of the bundle size in the second phase. During the solution process of the quadratic subproblem, a larger value of the rank used leads to a greater number of bundles, which increases both the subproblem generation time and the subproblem solution time. However, since more bundles contain more information, this may reduce the total number of iterations and, conversely, might even decrease the total solution time. Therefore, whether to adopt the prediction algorithm and the size of the prior rank used are both issues that require trade-offs.  

Take the test case with \( n=800 \), \( m=800 \), \( r^{\star}=10 \) as an example: using the prediction method allows obtaining the problem’s solution faster. This is because the appropriate increase in the number of bundles in the first phase, while increasing the subproblem generation and solution time, does not cause excessive impact. Instead, the additional information from more bundles reduces the total number of iterations, thereby lowering the total solution time. For larger-scale problems, however (e.g., \( n=2000 \), \( m=2000 \), \( r^{\star}=10 \)), the impact of increased subproblem generation and solution time may offset the advantage of reduced total iterations. In addition, since the step size is dynamically adjusted during the algorithm’s iteration process, increasing the number of bundles may not necessarily reduce the number of iterations, which could in turn lead to an increase in the overall solution time.  

On the whole, assuming that the rank of the optimal solution is unknown, the increase in solution time (if such an increase exists) caused by using Algorithm \ref{alg:4} to make predictions for the problem is acceptable. In other words, this process possesses a certain degree of robustness. 
\subsubsection{Heuristic upper bound of bundles under low condition number}
For low-rank SDP problems with low condition numbers, we consider that they have relatively simple problem structures, and using fewer bundles enables the polyhedral bundle to approximate the penalty function well. To use as few bundles as possible to reduce memory consumption, we conduct experiments on some random SDP instances by setting different upper bounds of bundles. For these instances, we set the initial step size \(t_0 = 10^{-2}\) and parameters \(\beta_1 = 0.05\), \(\beta_2 = 0.65\), \(\beta_3 = 0.001\). Then, the upper bounds of bundles are set as \(\frac{1}{2}r^{\star}(r^{\star}+1)-r^{\star}\), \(\frac{1}{2}r^{\star}(r^{\star}+1)\), \(\frac{1}{2}r^{\star}(r^{\star}+1)+r^{\star}\), \((r^{\star})^2-r^{\star}\), \((r^{\star})^2\), \((r^{\star})^2+r^{\star}\), and \(\infty\) respectively. We observe whether these bundle upper bounds are sufficient to meet the needs of the lower approximation model for approximating the penalty function within the given solution time and number of iterations.
\begin{table}
    \centering
    \caption{Heuristic Bundle Upper Bound for Random low rank and low condition number Instances}
    \label{t3}
    \setlength\tabcolsep{0.9pt}
    \begin{tabular*}{\textwidth}{@{\extracolsep\fill}cccccccccc}
\toprule 
~&~&~& \multicolumn{7}{c}{CPU Time(s)(iterations)}  \\ 
\cmidrule{4-10}
n&m&\(r^{\star}\)&\(\frac{r^{\star}(r^{\star}+1)}{2}-r^{\star}\)&\(\frac{r^{\star}(r^{\star}+1)}{2}\)&\(\frac{r^{\star}(r^{\star}+1)}{2}+r^{\star}\)&\((r^{\star})^2-r^{\star}\)&\((r^{\star})^2\)&\((r^{\star})^2+r^{\star}\)&\(\infty\) \\
\midrule    
500&500&10&\textbf{T} &\textbf{T} &1.14(46) &1.27(51) &1.26(50)&1.36(53)&\textbf{1.07}(38) \\
500&1000&10&\textbf{T}&5.30(182)&\textbf{1.15}(44)&1.50(43)&1.42(44)&1.52(47)&1.30(42)\\
500&3000&10&\textbf{T} &\textbf{T} &2.35(75) &\textbf{2.31}(73) &2.45(72)&2.35(73)&2.89(72) \\
\midrule   
800&800&10&\textbf{T} &\textbf{T}  &4.77(80) &2.46(45) &2.53(43) &\textbf{2.36}(43) &2.66(45) \\
800&800&15&\textbf{T}&\textbf{T}&4.54(58)&4.35(53)&4.16(50)&\textbf{3.93}(48)&4.96(51)\\
800&800&25&\textbf{T} &\textbf{T}  &9.72(66) &\textbf{9.16}(52) &9.26(52) &9.30(52) &11.86(54) \\
\midrule  
1000&10000&20&\textbf{T} &\textbf{T} &\textbf{14.79}(89) &21.45(108) &19.50(97) & 16.70(85)&28.97(94) \\
1000&10000&25&\textbf{T}&\textbf{T} &\textbf{17.08}(82)&17.97(77)&18.58(78)&18.80(78)&22.53(79)\\
1000&10000&35&\textbf{T} &\textbf{T} &\textbf{22.73}(73) &26.53(67) &26.23(66) & 27.46(67)&26.92(66) \\
\midrule  
1500&1500&15&\textbf{T}&\textbf{10.47}(45)&11.58(48)&11.15(45)&11.11(46)&11.36(47)&11.35(45)\\
1500&1500&20&\textbf{T}&13.02(42)&11.34(41)&10.70(38)&\textbf{10.55}(37)&10.98(38)&11.33(38)\\
1500&1500&30&\textbf{T}&27.75(66)&23.96(59)&24.04(53)&24.18(53)&24.28(53)&25.69(54)\\
\midrule 
2000&2000&10&\textbf{T}&28.13(77)&18.53(49)&17.13(46)&\textbf{16.64}(45)&17.43(46)&17.65(47)\\
3000&3000&15&\textbf{T}&80.21(72)&72.96(66)&74.08(65)&\textbf{70.29}(63)&74.70(66)&73.51(65)\\
4000&4000&20&\textbf{T}&\textbf{T} &173.17(56) &162.58(54) &\textbf{156.55}(52) &160.89(53) &161.19(53)\\
\hline
\end{tabular*}
\end{table}

From the experimental data in Table \ref{t3}, we find that for low-rank SDP instances with low condition numbers, the impact of the number of bundles on problem solution efficiency is relatively stable and exhibits a certain degree of robustness. However, if the number of bundles is insufficient, it is often difficult to complete the solution within the given time or number of iterations. In addition, increasing the number of bundles can improve solution efficiency to a certain extent, but this improvement is not sustained because more bundles directly reduce the solution efficiency of subproblems, making the number of bundles a parameter that requires trade-off. Meanwhile, using fewer bundles can reduce memory consumption to a certain extent. Therefore, in the subsequent experiments on low condition number problems, we uniformly adopt the upper bound of the number of bundles as \(l_{\max}=\frac{1}{2}r^{\star}(r^{\star}+1)+r^{\star}\).  

We present the relationship between the relative optimality conditions and the number of iterations for the low-condition-number, low-rank SDP instance with \(n=500\), \(m=500\), and \(r^{\star}=10\) in Figure \ref{fig:1}. The red curve corresponds to the spectral algorithm SpecBM, and the blue curve represents our algorithm OURS. It can be seen from the figure that OURS has a faster and more stable convergence rate. From the perspective of relative primal optimality conditions and relative dual optimality conditions, the convergence rate of OURS is approximately linear.
\begin{figure}
 \centering
 \subfigure[Relative primal suboptimality]{\label{fig:subfig:a1} 
 \includegraphics[width=2.5in]{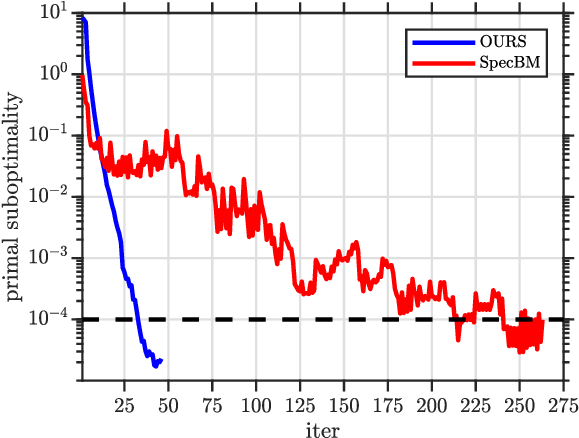}}
 \hspace{0.01in}
 \subfigure[Relative dual suboptimality]{\label{fig:subfig:b1} 
 \includegraphics[width=2.5in]{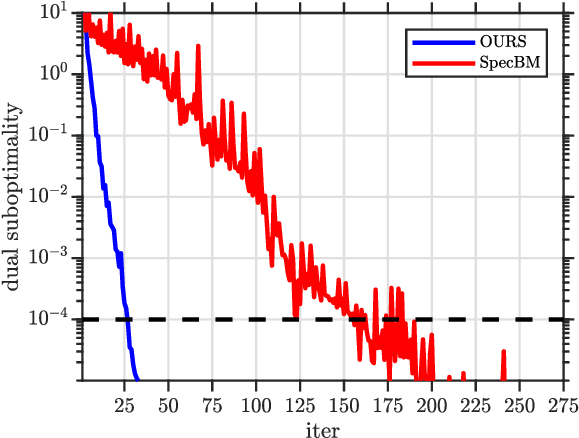}}
 \hspace{0.01in}
 \vfill
 \subfigure[Relative primal dual gap]{\label{fig:subfig:c1} 
 \includegraphics[width=2.5in]{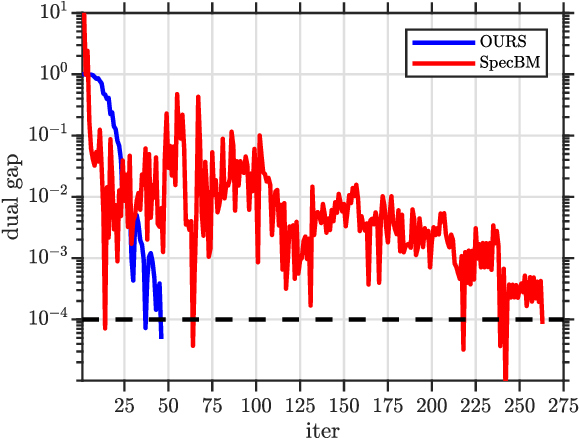}}
 \hspace{0.01in}
 \subfigure[Relative Accuracy]{\label{fig:subfig:d1} 
 \includegraphics[width=2.5in]{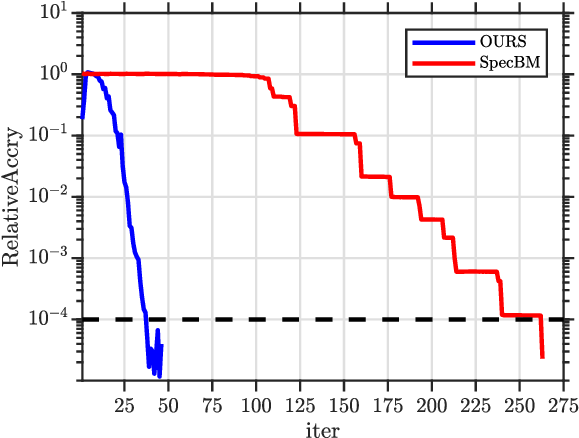}}
 \hspace{0.01in}\vskip -1mm
 \caption{The suboptimality of random instances with \(n=500, m=500\), and \(r^{\star}=10\).}\label{fig:1} 
\end{figure}

As shown in Table \ref{t4} below, we generated some small to medium-sized low-condition-number and low-rank SDP problems via Algorithm \ref{alg:5}, and controlled the sparsity of the problem data by adjusting the value of parameter \(\bar{\rho}\). For these problems, we set \(t_0 = 10^{-2}\), \(\beta_1 = 0.05\), \(\beta_2 = 0.65\), \(\beta_3 = 0.001\), and \(l_{\max}=\frac{1}{2}r^{\star}(r^{\star}+1)+r^{\star}\). Numerical experiments indicate that our algorithm can solve these instances more quickly: it significantly reduces the computation time compared with interior-point methods and SDPNAL+, and its performance is even more outstanding in comparison with SpecBM.
\begin{table}
    \centering
    \caption{Experimental results of Random sparse small and medium-sized instances}
    \label{t4}
    \setlength\tabcolsep{0.9pt}
    \begin{tabular*}{\textwidth}{@{\extracolsep\fill}ccccccccccc}
\toprule  
~&~&~&~&~&~& \multicolumn{5}{c}{CPU Time(s)(iterations)}  \\ 
\cmidrule{7-11}
\(\bar{\rho}\) & n &m& \(r^{\star}\) &\(\bar{\kappa}(X^{\star})\)&\(\bar{\kappa}(S^{\star})\)& OURS & SpecBM &SDPNAL+& MOSEK&SDPT3\\ 
\midrule    
    1e-2 & 300&300 &5 &2.42 &11.15 &\textbf{0.57}(56) &6.87(195)  &5.50 &0.66 &2.61 \\
    1e-2 & 300&300 &10&3.87 &11.15  &0.90(67) &20.91(220)  &2.78 &\textbf{0.68} &2.71 \\
    1e-2 & 300&300 &15&4.71 &11.15  &2.32(114) &46.47(236)  &3.28 &\textbf{0.63} &2.99 \\ 
    1e-2 & 300&300 &20&5.45 &10.96  &3.78(117) &114.04(238)  &4.20 &\textbf{0.64} &3.00 \\
\midrule    
    1e-3 & 500&500 &10&1.51 &8.15  &\textbf{1.14}(46) &26.01(262)  &9.20 &2.78&66.84\\
    1e-3 & 500&1000 &10&1.51 &8.15  &\textbf{1.15}(44) &26.99(204)  &7.91 &2.45&138.45\\
    1e-3 & 500&2000 &10&1.51 &8.15  &\textbf{2.17}(74) &46.66(239)  &10.65 &4.34&362.06\\ 
    1e-3 & 500&3000 &10&1.51 &8.15  &\textbf{2.35}(75) &57.06(218)  &11.67 &6.13&493.35\\ 
\midrule
    1e-3 & 800&800 &10&2.59 &8.58  &\textbf{4.77}(80) &71.14(263)  &20.22 &40.36 &119.35\\
    1e-3 & 800&800 &15&2.59 &8.58  &\textbf{4.54}(58) &122.96(237)  &16.63 &38.60 &115.12\\
    1e-3 & 800&800 &20&2.59 &8.58  &\textbf{7.51}(71) &242.15(265)  &52.13 &40.32 &116.02\\
    1e-3 & 800&800 &25&2.59 &8.58  &\textbf{9.72}(66) &522.44(292)  &16.76 &39.40 &116.95\\ 
\midrule    
    1e-4 & 1000&1000 &15&1.00 &8.46  &\textbf{10.61}(106) &111.01(275) &55.31 &9.38 &82.45\\
    1e-4 & 1000&2000 &20&1.01 &8.46  &\textbf{9.26}(74) &194.43(222) &55.99 &29.04 &346.52\\ 
    1e-4 & 1000&4000 &25&1.36 &8.46  &\textbf{10.57}(64) &411.81(212) &44.46 &71.04 &884.02\\ 
    1e-4 & 1000&6000 &30&1.36 &8.46  &\textbf{20.78}(83) &911.56(232) &75.45 &128.52 &\textbf{T}\\
\midrule    
    1e-4 & 1500&1500 &15&1.35 &8.15  &\textbf{11.58}(48) &255.69(280)  &142.02 &65.47&1058.5 \\
    1e-4 & 1500&1500 &20&2.07 &8.15  &\textbf{11.34}(41) &339.14(217)  &151.95 &63.77&988.57 \\ 
    1e-4 & 1500&1500 &30&2.07 &8.15  &\textbf{23.96}(59) &1111.5(228)  &117.08 &64.87&993.23 \\ 
    1e-4 & 1500&1500 &35&2.07 &8.15 &\textbf{25.49}(53) &\textbf{OOM}  &117.18 &84.08 &1134.2  \\ 
\midrule
    1e-4 & 1000&10000 &20&1.01 &8.46  &\textbf{14.39}(89) &\textbf{T}   &76.40  &45.05 &\textbf{T}\\ 
    1e-4 & 1000&10000 &25&1.36 &8.46  &\textbf{17.08}(82) &629.18(250)  &54.47 &43.15&\textbf{T}\\ 
    1e-4 & 1000&10000 &30&1.36 &8.46  &\textbf{27.07}(98) &983.51(224)  &72.21 &42.76&\textbf{T}\\
    1e-4 & 1000&10000 &35&1.36 &8.46  &\textbf{22.73}(73) &1267.3(220) &56.46 &45.44&\textbf{T}\\
\hline
\end{tabular*}
\end{table}

\begin{table}
    \centering
    \caption{Experimental results of Random sparse large-scale instances}
    \label{t5}
    \setlength\tabcolsep{0.9pt}
    \begin{tabular*}{\textwidth}{@{\extracolsep\fill}ccccccccccc}
\toprule  
~&~&~&~&~&~& \multicolumn{5}{c}{CPU Time(s)(iterations)}  \\ 
\cmidrule{7-11}
\(\bar{\rho}\) & n &m& \(r^{\star}\) &\(\bar{\kappa}(X^{\star})\)&\(\bar{\kappa}(S^{\star})\)& OURS & SpecBM &SDPNAL+& MOSEK&SDPT3\\ 
\midrule 
    1e-4 & 2000&2000 &10&4.30 &8.23 &\textbf{18.53}(49) &262.75(218) &250.11 &410.62&\textbf{T}\\ 
    1e-4 & 2000&4000 &10&4.30 &8.23 &\textbf{37.54}(98) &434.59(282)  &337.26 &1399.5&\textbf{T}\\
    1e-4 & 2000&6000 &10&4.30 &8.23 &\textbf{53.19}(130) &426.89(229) &360.55 &\textbf{T} &\textbf{T} \\
    1e-4 & 2000&8000 &10&4.30 &8.23 &\textbf{47.22}(108) &581.72(264) &410.54 &\textbf{T} &\textbf{T} \\
\midrule 
    1e-4 & 3000&3000 &15&2.07 &8.15  &\textbf{72.96}(66) &1082.9(216)  &819.91 &\textbf{T}&\textbf{T}\\  
    1e-4 & 3000&6000 &15&2.07 &8.15  &\textbf{99.99}(74) &1793.2(265)  &1048.6 &\textbf{T}&\textbf{T}\\ 
    1e-4 & 3000&9000 &15&2.07 &8.15  &\textbf{118.15}(81) &\textbf{T} &1360.0 &\textbf{T} &\textbf{T} \\
    1e-4 & 3000&12000 &15&2.07 &8.15  &\textbf{132.34}(86) &\textbf{T} &754.48 &\textbf{T} &\textbf{T} \\
\midrule 
    1e-4 & 4000&4000 &20&1.86 &8.15  &\textbf{173.17}(56) &\textbf{OOM}  &\textbf{T} &\textbf{T}&\textbf{T}\\
    1e-4 & 4000&8000 &20&1.86 &8.15  &\textbf{303.52}(86) &\textbf{OOM}  &\textbf{T} &\textbf{T}&\textbf{T}\\
    1e-4 & 4000&12000 &20&1.86 &8.15  &\textbf{392.41}(105) &\textbf{OOM} &\textbf{T} &\textbf{T} &\textbf{T} \\
    1e-4 & 4000&16000 &20&1.86 &8.15  &\textbf{449.07}(113) &\textbf{OOM} &\textbf{T} &\textbf{T} &\textbf{T} \\
\midrule 
    1e-4 & 5000&5000 &25&1.81 &8.59  &\textbf{586.89}(82) &\textbf{OOM} &\textbf{T} &\textbf{T} &\textbf{T} \\
    1e-4 & 5000&15000 &25&1.81 &8.59  &\textbf{742.57}(95) &\textbf{OOM} &\textbf{T} &\textbf{T} &\textbf{T} \\
    1e-4 & 5000&20000 &25&1.81 &8.59  &\textbf{803.91}(99) &\textbf{OOM} &\textbf{T} &\textbf{T} &\textbf{T} \\
    1e-4 & 5000&25000 &25&1.81 &8.59  &\textbf{1647.88}(190) &\textbf{OOM} &\textbf{T} &\textbf{T} &\textbf{T} \\
\midrule
    1e-4 & 7500&500 &15&1.36 &10.36  &\textbf{370.19}(62) &\textbf{OOM}  &\textbf{T} &\textbf{OOM}&\textbf{T}\\
    1e-4 & 10000&500 &15&1.70 &10.33  &\textbf{549.95}(50) &\textbf{OOM}  &\textbf{T} &\textbf{OOM}&\textbf{T}\\ 
    1e-4 & 12500&500 &15&2.02 &9.86  &\textbf{1363.58}(80) &\textbf{OOM}  &\textbf{T} &\textbf{OOM}&\textbf{T}\\
    1e-4 & 15000&500 &15&1.80 &10.36  &\textbf{1582.91}(58)&\textbf{OOM}&\textbf{T} &\textbf{OOM}&\textbf{T}\\
\hline
\end{tabular*}
\end{table}

As can be seen from Table \ref{t5}, in large-scale problems, OURs not only have an advantage in computation time but also can further reduce memory requirements. Based on the three groups of experiments mentioned above, OURs can be considered a promising implementation of the polyhedral bundle method.

\subsubsection{Medium and high condition numbers}
For all subsequent instances, the following parameters are set consistently across both medium- and high-condition cases: \(\beta_1 = 0.05\), \(\beta_3 = 0.001\), \(l_{\max}^{1} = \frac{1}{2}r^{\star}(r^{\star}+1)+r^{\star}\), \(l_{\max}^{2} = (r^{\star})^2\), and \(l_{\max}^{3} = 2(r^{\star})^2\). Only the parameters \(t_0\) and \(\beta_2\) differ between the two condition types. For medium-condition instances, we set \(t_0 = 10^{-5}\) and \(\beta_2 = 0.65\). For high-condition instances, we set \(t_0 = 10^{-7}\) and \(\beta_2 = 0.85\).
\begin{table}
    \centering
    \caption{Experimental results of median condition instances}
    \label{t6}
    \setlength\tabcolsep{0.9pt}
    \begin{tabular*}{\textwidth}{@{\extracolsep\fill}cccccccccc}
\toprule  
~&~&~&~&~&~& \multicolumn{4}{c}{CPU Time(s)(iterations)} \\
\cmidrule{7-10}
\(\bar{\rho}\) & n &m& \(r^{\star}\) &\(\bar{\kappa}(X^{\star})\)&\(\bar{\kappa}(S^{\star})\)& OURS(\(l_{\max}^{1}\)) & OURS(\(l_{\max}^{2}\))&OURS(\(l_{\max}^{3}\)) & SpecBM\\ 
\midrule 
    1e-3 & 500&500 &10&1.65 &358.83 &2.05(79) &\textbf{1.56}(57)&1.99(58) &\textbf{T}  \\
    1e-4 & 1500&1500 &20&2.45 &358.83 &\textbf{18.65}(65) &19.87(63)&22.80(62) &\textbf{T}  \\
    1e-4 & 2000&2000 &25&10.94 &362.79 &80.81(96) &\textbf{72.45}(86)&85.09(81) &\textbf{T}  \\
\hline
\end{tabular*}
\end{table}

\begin{table}
    \centering
    \caption{Experimental results of high condition instances}
    \label{t7}
    \setlength\tabcolsep{0.9pt}
    \begin{tabular*}{\textwidth}{@{\extracolsep\fill}cccccccccc}
\toprule  
~&~&~&~&~&~& \multicolumn{4}{c}{CPU Time(s)(iterations)} \\
\cmidrule{7-10}
\(\bar{\rho}\) & n &m& \(r^{\star}\) &\(\bar{\kappa}(X^{\star})\)&\(\bar{\kappa}(S^{\star})\)& OURS(\(l_{\max}^{1}\)) & OURS(\(l_{\max}^{2}\))& OURS(\(l_{\max}^{3}\)) & SpecBM\\ 
\midrule 
    1e-3 & 500&500 &10&1.65 &3579.4 &\textbf{T} &\textbf{2.28}(70)&3.31(67) &\textbf{T} \\
    1e-4 & 1500&1500 &20&2.46 &3579.4 &\textbf{T} &\textbf{27.72}(75)&39.06(75) &\textbf{T} \\
    1e-4 & 2000&2000 &25&11.32 &3619.0 &\textbf{T} &\textbf{72.83}(82)&124.65(80) &\textbf{T} \\
\hline
\end{tabular*}
\end{table}

As shown in Tables \ref{t6} and \ref{t7}, as the condition number of the problem increases, the problem structure becomes more complex, and the lower approximation model requires more bundles to enhance its approximation capability. However, excessively increasing the number of bundles may cause the generation process and solution process of the quadratic programming subproblem to take excessively long, resulting in an increase in the total solution time. Thus, it is necessary to appropriately increase the number of bundles.
\subsection{Max-Cut}
In this subsection, we conducted experiments on the combinatorial optimization problem Max-Cut, with data from \cite{davis2011university}, where \(L\) is the Laplacian matrix, and its positive semidefinite relaxation form is:
\begin{equation*}
\underset{X}{\min} \frac{1}{4}\langle L,X \rangle \text{ s.t. }X_{ii}=1,\forall i=1,\cdots,n\text{ and }X\succeq 0
\end{equation*}

Since the condition numbers of these problems are all relatively high, we thus set the parameters as \(t_0 = 0.01\), \(\beta_1 = 0.05\), \(\beta_2 = 0.65\), \(\beta_3 = 0.001\), \(l_{\max} = (r^{\star})^2\).
\begin{table}
    \centering
    \caption{Experimental results of Max-Cut instances}
    \label{t8}
    \setlength\tabcolsep{0.9pt}
    \begin{tabular*}{\textwidth}{@{\extracolsep\fill}ccccccccccc}
\toprule  
~&~&~&~&~&~& \multicolumn{5}{c}{CPU Time(s)(iterations)}  \\ 
\cmidrule{7-11}
 & n &m& \(r^{\star}\)&\(\bar{\kappa}(X^{\star})\) &\(\bar{\kappa}(S^{\star})\) & OURS & SpecBM &SDPNAL+& MOSEK&SDPT3 \\
\midrule    
    G1 & 800&800 &13&13.991&\(3.269\times 10^3\) &8.754(164) &33.18(158)  &17.08 &2.29&2.27\\
    G3 & 800&800 &14&187.37&\(1.590\times 10^3\) &11.62(202) &34.87(150)  &18.05 &2.34&2.94\\
    G5 & 800&800 &12&18.238&\(2.258\times 10^3\) &8.341(169) &23.32(130)  &22.02 &2.42&2.39\\
    G11 & 800&800 &6&10.137&\(3.399\times 10^5\) &\textbf{T} &\textbf{T}  &19.37 &2.15&1.58\\
    G12 & 800&800 &8&48.878&\(4.836\times 10^4\) &\textbf{T} &\textbf{T}  &20.61 &1.99&4.53\\
    G18 & 800&800 &10&13.422&\(6.181\times 10^3\) &9.175(202) &34.88(261)  &23.33 &2.12&2.41\\
    G19 & 800&800 &9&10.892&\(1.057\times 10^4\) &6.731(158) &\textbf{T}  &21.33 &2.40&2.27\\
    G21 & 800&800 &9&7.614&\(1.528\times 10^4\) &7.199(170) &\textbf{T}  &21.65 &2.14&2.11\\
    G23 & 2000&2000 &19&37.136&\(1.460\times 10^4\) &58.96(286) &\textbf{T}  &159.52 &30.47&21.92\\
    G24 & 2000&2000 &18&45.429&\(2.498\times 10^3\) &46.04(238) &291.54(169) &158.96 &28.86&20.19\\ 
    G35 & 2000&2000 &17&15.861&\(2.035\times 10^5\)&\textbf{T} &\textbf{T} &155.96 &30.24&21.34\\
    G43 & 1000&1000 &13&12.226&\(2.590\times 10^3\) &19.53(219) &54.73(195) &25.67 &3.91&3.67\\
    G44 & 1000&1000 &14 &100.50&\(1.222\times 10^3\)&12.27(198) &47.60(152)  &25.83 &4.04&3.67\\
    G45 & 1000&1000 &14 &16.478&\(1.650\times 10^3\)&17.02(229) &56.88(178)  &25.74 &3.88&3.79\\
    G46 & 1000&1000 &13 &8.345&\(1.353\times 10^6\)&16.71(207) &\textbf{T}  &25.96 &3.86&3.63\\
    G47 & 1000&1000 &14 &46.236&\(4.444\times 10^4\)&12.33(197) &\textbf{T}  &25.80 &3.96&3.79\\
\hline
\end{tabular*}
\end{table}

From the data in the table \ref{t8}, it can be observed that for problems with excessively large condition numbers (such as G11), it is difficult to meet the required precision within the given time or number of iterations. However, on the whole, our algorithm achieves higher solution efficiency on Max-Cut problems than SpecBM and SDPNAL+, but still cannot match the interior-point method.
\begin{figure}
 \centering
 \subfigure[Relative primal suboptimality]{\label{fig:subfig:a2} 
 \includegraphics[width=2.4in]{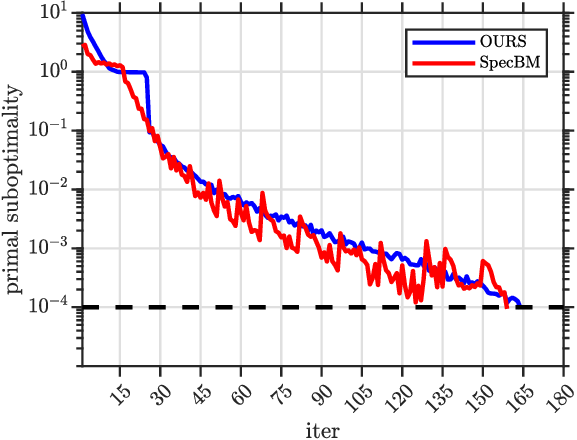}}
 \hspace{0.01in}
 \subfigure[Relative dual suboptimality]{\label{fig:subfig:b2} 
 \includegraphics[width=2.4in]{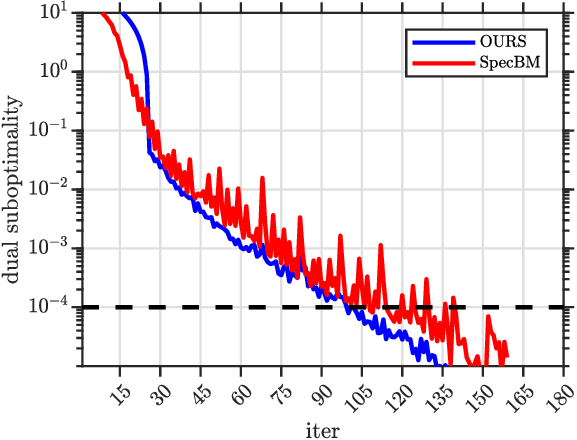}}
 \hspace{0.01in}
 \vfill
 \subfigure[Relative primal dual gap]{\label{fig:subfig:c2} 
 \includegraphics[width=2.4in]{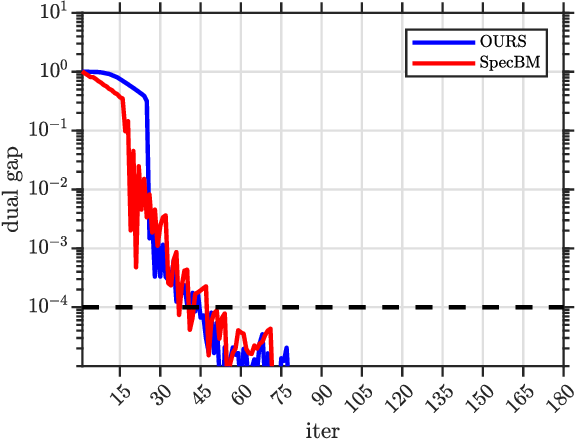}}
 \hspace{0.01in}
 \subfigure[Relative Accuracy]{\label{fig:subfig:d2} 
 \includegraphics[width=2.4in]{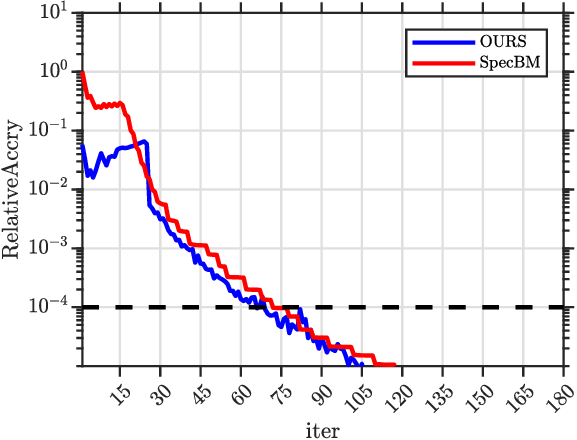}}
 \hspace{0.01in}\vskip -1mm
 \caption{The suboptimality of Max-Cut instances G1.}\label{fig:2} 
\end{figure}

By observing the relationship between the number of iterations and the optimality conditions of the two algorithms in Figure \ref{fig:2} above, we find that there is no significant overall difference between our algorithm and SpecBM, but the stability of our algorithm during the iteration process is better than that of SpecBM. Besides, we can also find from the relationship between CPU time and optimality conditions in Figure \ref{fig:3} below that the solution efficiency of our method is much higher than that of SpecBM.

\begin{figure}
 \centering
 \subfigure[Relative primal suboptimality]{\label{fig:subfig:a3} 
 \includegraphics[width=2.4in]{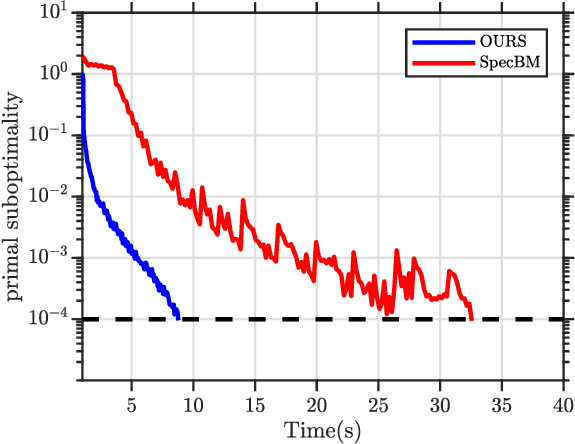}}
 \hspace{0.01in}
 \subfigure[Relative dual suboptimality]{\label{fig:subfig:b3} 
 \includegraphics[width=2.4in]{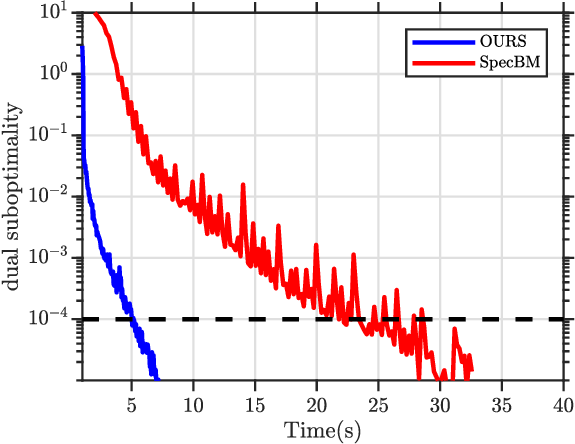}}
 \hspace{0.01in}
 \vfill
 \subfigure[Relative primal dual gap]{\label{fig:subfig:c3} 
 \includegraphics[width=2.4in]{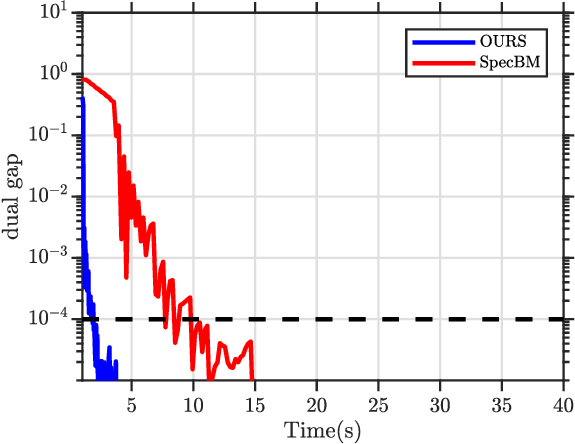}}
 \hspace{0.01in}
 \subfigure[Relative Accuracy]{\label{fig:subfig:d3} 
 \includegraphics[width=2.4in]{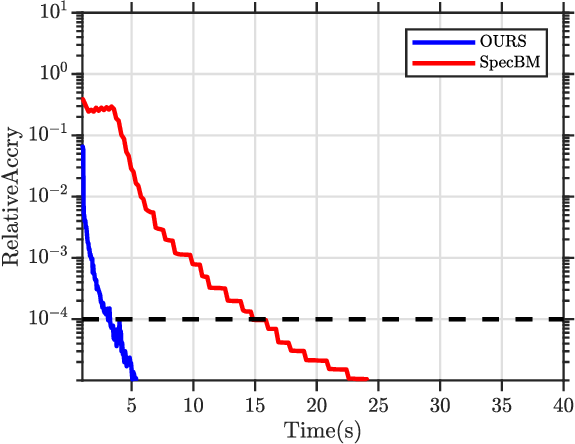}}
 \hspace{0.01in}\vskip -1mm
 \caption{The suboptimality of Max-Cut instances G1.}\label{fig:3} 
\end{figure}

For the three Max-Cut instances G11, G12, and G35, due to their excessively large condition numbers and complex problem structures, it is impossible to obtain solutions meeting the required precision within the given time or number of iterations. Thus, an intuitive idea is to increase the number of bundles so that the lower approximation model can better approximate such problems. First, we achieve this by increasing the upper bound of the bundle. Here, two new upper bounds for the bundle are set for experimentation, namely \(l_{\max}=2(r^{\star})^{2}\) and \(l_{\max}=5(r^{\star})^{2}\), with other parameters remaining unchanged.
\begin{table}
    \centering
    \caption{Convergence test of bundle upper bound on Max-Cut instances with high condition number}
    \label{t9}
    \setlength\tabcolsep{0.9pt}
    \begin{tabular*}{\textwidth}{@{\extracolsep\fill}ccccccccc}
\toprule  
~&~&~&~&~&~& \multicolumn{3}{c}{CPU Time(s)(iterations)} \\
\cmidrule{7-9}
 & n &m& \(r^{\star}\) &\(\bar{\kappa}(X^{\star})\)&\(\bar{\kappa}(S^{\star})\)& \((r^{\star})^{2}\) & \(2(r^{\star})^{2}\)&\(5(r^{\star})^{2}\)\\ 
\midrule 
    G11 & 800&800 &6&10.137 &\(3.399\times 10^5\)  &\textbf{T} &\textbf{T} &26.40(405)\\
    G12 & 800&800 &8&48.878 &\(4.836\times 10^4\)  &\textbf{T} &27.95(425) &56.53(392)  \\
    G35 & 2000&2000 &17&15.861 &\(2.035\times 10^5\) &\textbf{T} &175.40(365) &574.01(368)  \\
\hline
\end{tabular*}
\end{table}

The numerical experimental results in Table \ref{t9} show that increasing the upper bound of the number of bundles enables these three instances to meet the preset solution accuracy requirements within the given time and number of iterations. Additionally, increasing this upper bound can reduce the total number of iterations to some extent. However, it also raises the difficulty of solving the quadratic programming subproblems, which may potentially prolong the overall solution time. Therefore, once a given solution accuracy requirement is set, adjusting the upper bound of the number of bundles requires a trade-off.

Increasing the bundle upper bound is aimed at enhancing the approximation capability of the lower approximation model, that is, improving the amount of information that the model can capture. Therefore, in addition to changing the bundle upper bound, the approximation capability of the model can also be improved by adjusting the rank used in the iteration process. For example, for instance G11, the rank of its optimal solution is \( r^{\star}=6 \), and the rank used in the iteration process is also \( r=6 \), which means that after each iteration, the eigenvectors corresponding to the smallest \( r \) eigenvalues of the dual variable \( S \) are added to the bundle. At this time, the adopted bundle upper bound is \( l_{\max}=(r^{\star})^2=36 \). Thus, another approach is to increase the rank \( r \) used in the iteration process while setting the bundle upper bound as \( l_{\max}=r^2 \), which also serves to improve the information-capturing capability of the model. Here, we adopt the ranks \( r=r^{\star}+6 \) and \( r=r^{\star}+12 \).
\begin{table}
    \centering
    \caption{Convergence behavior of high condition number Max-Cut instances with different ranks adopted in the iteration}
    \label{t10}
    \setlength\tabcolsep{0.9pt}
    \begin{tabular*}{\textwidth}{@{\extracolsep\fill}ccccccccc}
\toprule  
~&~&~&~&~&~& \multicolumn{3}{c}{CPU Time(s)(iterations)} \\
\cmidrule{7-9}
 & n &m& \(r^{\star}\) &\(\bar{\kappa}(X^{\star})\)&\(\bar{\kappa}(S^{\star})\)& \(r^{\star}\) & \(r^{\star}+6\)&\(r^{\star}+12\)\\ 
\midrule 
    G11 & 800&800 &6&10.137 &\(3.399\times 10^5\)  &\textbf{T} &20.39(402)&38.29(424) \\
    G12 & 800&800 &8&48.878 &\(4.836\times 10^4\)  &\textbf{T} &12.61(215)&35.62(325) \\ 
    G35 & 2000&2000 &17&15.861 &\(2.035\times 10^5\) &\textbf{T} &160.11(442)&145.18(328) \\
\hline
\end{tabular*}
\end{table}

Table \ref{t10} shows that for Max-Cut problems with excessively large condition numbers, the polyhedral bundle method can also shorten the solution time by increasing the rank \(r\) used in the iteration process; similar to directly increasing the number of bundles, increasing the value of the rank can indirectly increase the number of bundles. However, excessively increasing the value of the rank may introduce excessive useless information and cause interference, leading to a decrease in the solution efficiency of subproblems; therefore, it is necessary to balance the increased value of the rank.

In summary, Tables \ref{t9} and \ref{t10} show that for Max-Cut problems with high condition numbers, the solution efficiency can be improved by appropriately increasing the upper bound of the number of bundles or the value of the rank used in iterations; however, how to appropriately adjust these two parameters remains an unsolved problem.
\section{Conclusion}\label{sec:6}
Based on the traditional polyhedral bundle method, this study proposes an improved subproblem, which can avoid the singularity of quadratic programming (QP) subproblems under general circumstances and reduce iterative constraints. By combining the convergence proof of the general bundle method, it is verified that the update process of this method satisfies its three basic conditions, thereby establishing the basic convergence. The upper limit of the number of bundles is heuristically selected using the rank of the optimal solution of the original problem, which ensures the solution efficiency of the QP subproblem. When the rank of the optimal solution is unknown, a rank prediction method can be adopted, and this method exhibits a certain degree of robustness.

This method performs well in handling random sparse SDP problems with low condition numbers. However, when applied to Max-Cut problems or problems with high condition numbers, an insufficient number of bundles will result in insufficient approximation capability of the lower approximation model and poor solution performance. In such cases, the overall solution efficiency can be improved by appropriately increasing the upper limit of the number of bundles or the rank used in iterations.

In summary, the polyhedral bundle method has advantages in dealing with SDP problems with low condition numbers, but the selection of the upper limit of the number of bundles is subjective and lacks theoretical analysis, and the impact of the condition number on the approximation capability of the lower approximation model is also unclear. Future research plans to study the quantitative relationship between parameters such as the rank of the optimal solution of the problem and the condition number, and the approximation capability of the lower approximation model, thereby proposing a better rank prediction method and an algorithm for adaptively adjusting the number of bundles.

\noindent {\bf Acknowledgments.} The authors thank Professor Ya-xiang Yuan for reading the manuscript of this work, which originated from the second author's Ph.D. research under his supervision. This work was supported by the National Key R\&D Program of China grant No.2022YFA1003800, the National Natural Science Foundation of China No. 12201318 and the Fundamental Research Funds for the Central Universities No.63253105.

\noindent {\bf Data Availability.} Data will be made available on reasonable request.
\bibliography{refs}
\end{document}